\newtheorem{theorem}{Theorem}
\newtheorem{corollary}{Corollary}[section]
\newtheorem{lemma}[corollary]{Lemma}
\newtheorem{proposition}[corollary]{Proposition}
\newtheorem{conjecture}[corollary]{Conjecture}
\newtheorem{remark}{Remark}
\newcommand{\Prob} {{\mathbb P}}
\newcommand{\N}{{\mathbb N}}
\newcommand{\R}{{\mathbb{R}}}
\newcommand{\C}{{\mathbb C}}
\newcommand{\Q}{{\mathbb Q}}
\def \Half {{\mathbb H}}
\def \loop {{\cal L}}
\def \H {{ \mathcal H}}
\newcommand{\restr}[1]{|_{#1}}
\newcommand\nth{\textsuperscript{th}\xspace}
\newcommand\st{\textsuperscript{st}\xspace}
\newcommand{{\pe}}  {\partial_e}
\newcommand {{\lodd}} {{\mathcal J}}
\newcommand{{\inrad}} {{\rm inrad}}
\newcommand {{\cent}} {{\bf c}}
\newcommand {{\eb}}  {{\bf e}}
\newcommand{{\osc}} {{\rm osc}}
\newcommand{{\dyadic}}  {{\mathcal Q}}
\newcommand{{\partition}} {{\mathcal P}}
\newcommand {{\bloop}}  {{\mathcal O}}
\newcommand {{\crad}}  {{\rm crad}}
\newcommand {{\wind}} {{\rm wind}}
\newcommand {{\z}} {{\bf z}}
\newcommand {{\whoknows}} {{\mathcal A}}
\newcommand{{\exc}}{{\mathcal E}}
\newcommand{{\loops}} {{\loop}}
\newcommand {{\squares}}{{\mathbb A}}
\newcommand {{\lattice}}  {{\mathcal L}}
\author{Stephen Yearwood}
\title{The topology of $SLE_{\kappa}$ is random for $\kappa >4$}
\begin{document}
\maketitle

\begin{abstract}
    We study the topology of $SLE$ curves for $\kappa > 4$. More precisely, we show that, a.s., there is no homeomorphism $\Phi: \overline{\Half} \rightarrow \overline{\Half}$, taking the range of one independent $SLE$ curve to another for $\kappa \in (4,8)$. Furthermore, we extend the result to $\kappa \geq 8$ by showing that there is no homeomorphism taking one $SLE$ curve to another, when viewed as curves modulo parametrization.
\end{abstract}
\section{Introduction}
\subsection{Initial Overview}
The Schramm-Loewner Evolutions ($SLE_{\kappa}$), introduced by Oded Schramm \cite{schramm0}, describes a family of probability distributions,
parameterized by $\kappa > 0$ on non-self traversing curves connecting two boundary points in a planar, simply connected domain. They are characterized by a conformal invariance condition and a domain
Markov property. They were initially observed as possible candidates for the scaling limits of various discrete lattice models in statistical physics; we now know that some of these convergences do hold, and so $SLE$ exhibits a universality in its definition. 

$SLE_{\kappa}$ is the random growth of a set $K_t$, as described through a conformal map $g_t(z)$ on the complement of $K_t$. This map $g_t(z)$ is the solution of the Loewner differential equation driven by a Brownian motion, whose `speed' is determined by a single parameter $\kappa$. Rhodes and Schramm in \cite{schramm-sle} showed that for $\kappa \neq 8$, a.s. there is a (unique) continuous path $\eta :[0, \infty) \to \overline{\Half}$ such that for each $t > 0$ the set $K_t$ is the union of $\eta[0, t]$ and the bounded connected components of $\Half \setminus \eta[0, t]$. This has also been shown for $\kappa =8$, but was  dealt with separately. We call the path $\eta$ the SLE trace or $SLE$ curve. It was shown as well in \cite{schramm-sle} that $\underset{t \to \infty}{\text{lim}}|\eta(t)| = \infty$ a.s. We will need the following facts about the curve which are proven in \cite{schramm-sle}:
\begin{itemize}
\item If $\kappa \leq 4$, then $\eta$ is simple with
$\eta(0,\infty) \subset \Half$. 
\item  If $ 4 < \kappa < 8$, then $\eta(0,\infty)$ has double
points and intersects $\R$. 
\item If $\kappa \geq 8$, the curve is space-filling.

\end{itemize}
There are three (well studied) variants of $SLE$ : chordal $SLE$, which connects two boundary points (prime ends) in a given domain, radial $SLE$, which connects a boundary point to an interior point, and whole-plane $SLE$, which connects two points on the Riemann sphere. In this paper, we will focus on chordal $SLE$, but we expect that our results can be generalized to other cases. One can also see \cite{lawler-book, bn-sle-notes, werner-notes} for some expository work on $SLE$ which go into details beyond the scope of this paper.

 
 Most works on $SLE$ have focused on its geometric and probabilistic properties, e.g., Hausdorff dimensions of various subsets of the curves, formulas for the probabilities of various events, and connections to other random objects. In this work, we will address a very basic question about the topology of $SLE$: namely, is the topology of the curve deterministic? Said differently, if we have two independent chordal SLE$_\kappa$ curves $\eta^1$ and $\eta^2$ (viewed as curves modulo time parametrization), does there a.s.\ exist a homeomorphism $  \overline{\mathbb H} \rightarrow \overline{\mathbb H}$ taking $\eta^1$ to $\eta^2$?

Since $SLE_\kappa$ is a simple curve for $\kappa \leq 4$, the answer to the above question is clearly affirmative in this case. For $\kappa > 4$, however, the answer is less obvious. On the one hand, many events for SLE$_\kappa$ occur with probability strictly between 0 and 1 (see Section 2 of \cite{miller-wu-dim}) so there are many opportunities for one of $\eta^1$ or $\eta^2$ to do something that the other does not. On the other hand, it is common for seemingly very different fractal sets to be homeomorphic. For example, if $K_1$ and $K_2$ are compact, non-empty, totally disconnected subsets of $\C$ without isolated points (e.g., Cantor-type sets), then there is a homeomorphism from $\C$ to $\C$ which takes $K_1$ to $K_2$ \cite{moi-geo}.
The main results of this paper show that the topology of SLE$_\kappa$ is random for $\kappa  > 4$. For $\kappa \in (4,8)$, we prove the stronger statement that the topology of the range is random. The results of this paper are in a similar vein to those of \cite{msw-sle-range}, which shows that an $SLE_\kappa$ curve for $\kappa \in (4,8)$ is not determined by its range. Both this paper and \cite{msw-sle-range} answer a seemingly simple question about $SLE$ whose answer is much less obvious than one might initially expect.


\subsection{Summary of results}
\begin{theorem}
\label{thm1}
The topology of chordal $SLE_{\kappa}$ is not deterministic in the following sense: Fix $\kappa \in (4,8),$ and consider two instances of $SLE_{\kappa}$, $\eta^1$ and $\eta^2$ in $\Half$. Then a.s. there is no homeomorphism on $\overline{\Half}$ taking  the range of $\eta^1$ to the range of $\eta^2$.
\end{theorem}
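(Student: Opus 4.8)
My plan is to recast the statement as an atomicity question for the law of the \emph{topological type} of the range, and then to prove that this law is diffuse. Write $R^i$ for the range of $\eta^i$, and declare $(\overline{\Half},R^1)$ and $(\overline{\Half},R^2)$ to have the same topological type if some homeomorphism of $\overline{\Half}$ carries $R^1$ onto $R^2$; let $\mu$ be the law of this type. Since $\eta^1,\eta^2$ are i.i.d., the probability that such a homeomorphism exists is exactly $\Prob[\mathrm{type}(R^1)=\mathrm{type}(R^2)]=\sum_t \mu(\{t\})^2$, the sum running over the (at most countably many) atoms of $\mu$. Hence \Cref{thm1} is \emph{equivalent} to the assertion that $\mu$ is atomless: no single topological type is attained with positive probability. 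The entire task is therefore to exhibit enough genuinely topological randomness in $R$.

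First I would isolate the data a homeomorphism of $\overline{\Half}$ must preserve: it fixes $\R=\partial\overline{\Half}$ setwise (respecting its order up to reversal), carries $R\cap\R$ onto $R\cap\R$, induces an incidence-preserving bijection of the complementary components of $R$ in $\overline{\Half}$ (in particular preserving the distinguished subfamily whose closures meet $\R$), and preserves the local valence at each multiple point of $\eta$. For $\kappa\in(4,8)$ these complementary ``bubbles'', together with their pinch-point adjacencies and nesting, carry a rich combinatorial structure; moreover the marked prime ends $0,\infty$ should be recoverable from the topology alone, being the two boundary points at which $R$ meets $\R\cup\{\infty\}$ with the exceptional local behaviour forced by $\eta$ running from $0$ to $\infty$. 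Rooting this structure at $0,\infty$ yields a graded rooted tree $\mathcal{T}(R)$: its ``spine'' is the nested sequence of bubbles that separate $0$ from $\infty$, canonically ordered by betweenness, and this grading supplies an intrinsic $\N$-indexing. Crucially, $\mathcal{T}(R)$ is a topological invariant.

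To show $\mu$ is atomless I would read off, along the spine of $\mathcal{T}(R)$, an $\N$-indexed sequence of binary topological features $b_1,b_2,\dots$ and argue that the joint law of $(b_n)_n$ is diffuse, so that two independent samples disagree almost surely. Non-degeneracy of each $b_n$ is exactly the kind of statement recalled in the introduction, that many topological events for $SLE_\kappa$ have probability strictly between $0$ and $1$; the (approximate) independence of the $b_n$ across generations is where the conformal domain Markov property and the standard decorrelation of the curve across well-separated scales enter. The point is that an $\N$-indexed feature sequence is compared under \emph{equality}, for which i.i.d.-like non-degeneracy already forces atomlessness, rather than under order-isomorphism.

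The point requiring the greatest care—and which I expect to be the main obstacle—is that self-similarity works against us. A naive invariant, such as a single binary label on each boundary bubble, cannot distinguish the two ranges: by scale invariance each label is dense in the (countable, dense, endpoint-free) order of boundary gaps, and a back-and-forth argument then makes the labelled order type almost surely constant, \emph{producing} an atom rather than ruling one out. For the same reason one cannot reduce to a $0/1$-law: the topological type is itself preserved by the scaling $z\mapsto\lambda z$, so the very conclusion we seek forces the scaling action to be non-ergodic, and no ergodic dichotomy is available. The diffuseness must therefore come from rigid, hierarchical information—the isomorphism type of the graded rooted tree $\mathcal{T}(R)$, compared under equality—whose atomlessness survives back-and-forth. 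Establishing that $\mathcal{T}(R)$ genuinely carries such rigidity, that the marked points and spine are topologically canonical, and that the probabilistic independence of $SLE$ across scales can be converted into independence of bona fide topological invariants, is the crux of the argument.
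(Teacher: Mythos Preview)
Your reduction to atomlessness and your intended observable are essentially the paper's: the ``spine'' will be the sequence of type-3 bubbles (complementary components between the left and right boundaries whose closures meet $\R$ on both sides), and the binary feature records whether a type-1 or type-2 bubble sits between consecutive type-3 bubbles. But the spine is \emph{not} $\N$-indexed: type-3 bubbles accumulate at both $0$ and $\infty$, so the sequence is bi-infinite and canonical only up to an index shift (and possibly a reversal). More importantly, you have no concrete mechanism for proving atomlessness, and you explicitly dismiss the one that actually works. You are right that a naive 0--1 law applied directly to the topological type would force the type to be deterministic, which is the wrong conclusion. But the paper \emph{does} use Blumenthal's 0--1 law, applied not to the type itself but to the event that a fixed left-infinite binary pattern $y$ appears as $(\dots,X_{-k-1},X_{-k})$ at a scale-dependent index $k$: if this has positive probability at one scale, scale invariance makes the probability independent of scale, and the 0--1 law then forces the pattern to recur along a sequence of scales almost surely. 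This in turn forces $y$, and then the entire sequence $X$, to be periodic and deterministic modulo shift.

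The contradiction is completed by showing that every finite binary word occurs in $X$ with positive probability, via \cref{millerwu} applied to the conditional law of the right boundary given the left (\cref{lemtau} and \cref{propzm}). This periodicity detour is not a convenience but a necessity: \cref{lemtau} gives conditional probabilities strictly inside $(0,1)$ but \emph{not} uniformly bounded away from $\{0,1\}$, so your ``i.i.d.-like non-degeneracy'' does not directly produce a product estimate $\Prob[X=x]\le\prod_n(1-\epsilon)=0$. The atomlessness genuinely comes from the interplay between scale invariance (which forces any atom to be periodic) and the any-finite-pattern lemma (which rules periodicity out), not from decorrelation across scales. Your final paragraph correctly flags the crux but then points away from the right tool; filling the gap requires exactly the 0--1 argument you set aside.
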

We consider the left and right boundaries of an $SLE$ curve $\eta$ (which are boundary-touching $SLE_{16/\kappa}(\bar{\rho})$ curves). These curves form `bubbles' in $\Half$ (which we characterize explicitly in a later section) which we use as the primary observable to prove \cref{thm1}.

The result also holds for $\kappa \geq 8$. The proof is similar, though a bit more work is needed in the setup.  
\begin{theorem}
\label{thm2}
The topology of chordal $SLE_{\kappa}$ is not deterministic for $\kappa \geq 8$ in the following sense: Consider two  $SLE_{\kappa}$ curves, $\eta^1$ and $\eta^2$ in $\Half$. Then a.s. there is no homeomorphism $\Phi: \overline{\Half} \rightarrow \overline{\Half}$ such that $\Phi(\eta^1)  = \eta^2$ viewed as curves modulo time parametrization. 
\end{theorem}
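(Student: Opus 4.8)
The plan is to reduce the space-filling case to a frontier/bubble analysis that mirrors the proof of \cref{thm1}, the ``extra setup'' being the recovery of usable observables when the range is all of $\overline{\Half}$. Since $\kappa \geq 8$ makes $\eta^1,\eta^2$ space-filling, the ranges are both equal to $\overline{\Half}$ and are therefore useless; instead I would exploit the full structure of the curve modulo parametrization. The first step is a clean reduction: any homeomorphism $\Phi:\overline{\Half}\to\overline{\Half}$ maps $\R$ to $\R$ and $\Half$ to $\Half$, and if $\Phi\circ\eta^1 = \eta^2\circ\phi$ for an increasing reparametrization $\phi$, then for every $t$ we have $\Phi(K^1_t) = \Phi(\eta^1[0,t]) = \eta^2[0,\phi(t)] = K^2_{\phi(t)}$, where $K^i_t$ denotes the filled hull $\eta^i[0,t]$. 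Thus $\Phi$ carries the entire increasing family of hulls of $\eta^1$ onto that of $\eta^2$, and in particular it maps each frontier $F^1_t := \partial K^1_t \cap \Half$ onto $F^2_{\phi(t)}$. This identifies the frontiers as the parametrization-intrinsic objects that a hypothetical $\Phi$ would have to respect.

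Next I would analyze these frontiers. By $SLE$ duality, and because $16/\kappa \leq 2$ for $\kappa \geq 8$, each $F^i_t$ is a \emph{simple} $SLE_{16/\kappa}(\bar\rho)$-type curve, and as $t$ increases through the times at which $\eta^i$ pinches off a region (i.e.\ touches $\R$ or its past hull), the frontiers enclose bounded Jordan domains that $\eta^i$ subsequently fills. These swallowed regions are the exact analogues of the ``bubbles'' used for $\kappa\in(4,8)$. I would organize them into a decorated \emph{bubble tree} $\tree(\eta^i)$ recording the nesting of the swallowed domains, the left/right type of each (which side of the frontier it lies on), and the order data inherited from the parametrization (the relative order in which sibling bubbles are entered and completed). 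The point of Step 1 is that $\tree(\eta^i)$ is intrinsic to $\eta^i$ modulo parametrization, and that $\Phi$ — mapping hulls to hulls, frontiers to frontiers, and preserving visiting order — induces an isomorphism $\tree(\eta^1)\cong\tree(\eta^2)$ of decorated trees.

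With invariance in hand, the non-determinism follows the template of \cref{thm1}: I would show that $\tree(\eta)$ encodes a non-atomic (indeed infinitely independent) amount of randomness, so that two independent instances yield a.s.\ non-isomorphic decorated trees, whence no $\Phi$ can exist. Concretely, I would exhibit a homeomorphism-invariant statistic extracted from $\tree$ — built from the left/right-type decorations and the order structure along a distinguished infinite ray of the tree — whose law is non-atomic, using the same probabilistic input (events of probability strictly in $(0,1)$, conformal Markov property, and the near-independence of the bubble structure across successive levels) that drives the $\kappa\in(4,8)$ argument. Since $\eta^1 \perp \eta^2$, the two statistics disagree almost surely.

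The main obstacle is the setup in the first two steps rather than the final probabilistic input. The delicate points are: (i) showing that the hull filtration and the frontier/bubble data are genuinely recoverable from the mere equivalence class of a space-filling, non-injective curve modulo parametrization, so that ``$\tree(\eta)$'' is well defined; and (ii) verifying that an \emph{arbitrary} homeomorphism $\Phi$ — which may distort scales and all metric features beyond recognition — nonetheless preserves the purely combinatorial-topological tree, including its order decorations. Once the frontiers are pinned down as in Step 1, the duality identification of $F^i_t$ as $SLE_{16/\kappa}(\bar\rho)$ curves and the extraction of a non-atomic invariant should transfer from the proof of \cref{thm1} with only cosmetic changes.
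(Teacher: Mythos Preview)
Your high-level reduction is sound: a homeomorphism taking $\eta^1$ to $\eta^2$ modulo parametrization does carry the hull filtration of one onto the other, hence respects the combinatorics of swallowed regions. But from that point on your plan diverges sharply from the paper, and the place where you wave your hands is exactly where the work lies.

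The paper does \emph{not} build a bubble tree or reuse the left/right boundary bubbles of \cref{thm1}. For $\kappa\geq 8$ there are no cut points, so the region between the left and right boundaries has no nontrivial component structure and the type-0/1/2/3 classification of \cref{thm1} simply does not exist; your remark that the invariant ``should transfer with only cosmetic changes'' is therefore wrong as stated. Instead the paper fixes a concrete one-dimensional observable tied to the origin: it looks at the discrete set $\mathcal S$ of left/right crossing times of $\eta$ about $0$, picks one crossing $\eta_J$, records the sequence of marked points $\{X_k\}$ on $\R$ coming from the sub-crossings of $\eta_J$ between the two outer boundaries (pushed forward by the Loewner map at time $\tau_J$), and then counts, for each interval $[X_{k+1},X_k]$, the number $N_k$ of endpoints of \emph{future} left/right crossings that land in it. The non-atomicity of $\{N_k\}$ is established by a direct harmonic-measure dichotomy together with Beffara's one-sided hitting formula (\cref{beff}), not by anything resembling the \cref{millerwu}-based argument of \cref{thm1}. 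A homeomorphism must match the crossing structure about $0$ of $\eta^1$ with that about the image point for $\eta^2$, forcing $\{N_k^1\}=\{N_{k+l}^2\}$ for some shift, which has probability zero.

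So: your frontier/hull reduction is a reasonable starting point and could perhaps be completed into an alternative proof, but the proposal as written has a genuine gap. You have not defined a specific invariant, and the probabilistic input you need (that the invariant has non-atomic law) does \emph{not} follow from the $\kappa\in(4,8)$ machinery --- a new estimate is required, and the paper supplies one of a rather different flavor. If you want to pursue the bubble-tree route, you would need to (i) pin down precisely which decorated tree you mean for a space-filling curve, (ii) isolate a concrete statistic along your ``distinguished ray'' whose conditional law given the past is never a point mass, and (iii) prove that last fact --- none of which is cosmetic.
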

\begin{remark}
Notice that in \cref{thm2}, we care about parametrized curves, and so  preservation of ranges in this setting makes less sense. Recall $SLE$ in this instance is plane filling. 
\end{remark}
As a natural extension, one can think about the behavior of these curves for varying $\kappa$.

\begin{conjecture}
Let $\kappa_1 , \kappa_2 > 4$ be distinct. Let $\eta_1$ (resp.\ $\eta_2$) be a chordal SLE$_{\kappa_1}$ (resp.\ SLE$_{\kappa_2}$) in $\mathbb H$. Almost surely, there is no homeomorphism $\Phi : \overline{\mathbb H} \rightarrow \overline{\mathbb H}$ such that $\Phi(\eta^1) = \eta^2$ viewed as curves modulo time parametrization. If one of $\kappa_1$ or $\kappa_2$ is in $(4,8)$, a.s., there is no such homeomorphism which takes the range of $\eta_1$ to the range of $\eta_2$.
\end{conjecture}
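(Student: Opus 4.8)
The plan is to reuse the topological invariant built from the bubble structure in the proofs of \cref{thm1} and \cref{thm2}, and to observe that the distinct-$\kappa$ statement follows from essentially the same probabilistic mechanism, supplemented by an elementary argument in the ``mixed'' regime. I would first record the structure extracted in those proofs in a form intrinsic to a single curve: for a chordal $SLE_\kappa$ curve $\eta$ there is a statistic $T(\eta)$, taking values in a Polish space, built from the left/right boundary $SLE_{16/\kappa}(\bar\rho)$ curves and their bubbles, with two properties. First, $T$ is a \emph{topological invariant}: if $\Phi:\overline{\Half}\to\overline{\Half}$ is a homeomorphism carrying the range of $\eta^1$ onto the range of $\eta^2$ (for $\kappa\in(4,8)$), or carrying $\eta^1$ to $\eta^2$ as curves modulo parametrization (for $\kappa\ge 8$), then $T(\eta^1)=T(\eta^2)$. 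Second, for each fixed $\kappa$ the law $\mu_\kappa$ of $T(\eta)$ is non-atomic; this is exactly what drives \cref{thm1} and \cref{thm2}, since for two independent copies with the same $\kappa$ one has $\Prob(T(\eta^1)=T(\eta^2))=\int \mu_\kappa(\{x\})\,\mu_\kappa(dx)=0$.

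The point I would stress is that neither property uses that the two curves share the same parameter. Indeed, the homeomorphism-invariance in the first property is a statement about a single realization, so it applies verbatim when $\eta^1\sim SLE_{\kappa_1}$ and $\eta^2\sim SLE_{\kappa_2}$ with $\kappa_1\ne\kappa_2$. Taking $\eta_1,\eta_2$ independent with $T(\eta_1)\sim\mu_{\kappa_1}$ and $T(\eta_2)\sim\mu_{\kappa_2}$, independence together with the non-atomicity of $\mu_{\kappa_2}$ gives
\[
\Prob\bigl(T(\eta_1)=T(\eta_2)\bigr)=\int \mu_{\kappa_2}(\{x\})\,\mu_{\kappa_1}(dx)=0 .
\]
By the invariance property, on the event that a homeomorphism of the required type exists we must have $T(\eta_1)=T(\eta_2)$; hence a.s.\ no such homeomorphism exists. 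This requires no comparison of the two laws and in particular no disjointness of their supports: non-atomicity of a single one of them suffices, and if anything the distinct-$\kappa$ case is more robust, since even a crude difference in the laws (e.g.\ differing supports) would do. This argument handles both $\kappa_1,\kappa_2\in(4,8)$ (range statement, which is stronger and hence also yields the parametrized statement) and $\kappa_1,\kappa_2\ge 8$ (parametrized statement).

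It remains to treat the mixed regime in which exactly one of $\kappa_1,\kappa_2$, say $\kappa_1$, lies in $(4,8)$ while $\kappa_2\ge 8$. Here I would argue directly rather than through $T$: the range of $\eta_2$ is all of $\overline{\Half}$ by space-fillingness, whereas the range of $\eta_1$ is nowhere dense in $\overline{\Half}$ (it is closed with Hausdorff dimension $1+\kappa_1/8<2$, hence has empty interior). Any homeomorphism $\Phi$ of $\overline{\Half}$ carries nowhere dense sets to nowhere dense sets, so $\Phi$ cannot map the range of $\eta_1$ onto $\overline{\Half}$; this rules out the range statement, and since $\Phi(\eta^1)=\eta^2$ modulo parametrization would force $\Phi(\mathrm{range}\,\eta_1)=\mathrm{range}\,\eta_2=\overline{\Half}$, it rules out the parametrized statement as well.

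The main obstacle, as I see it, is verifying the two properties of $T$ uniformly across the relevant ranges of $\kappa$, rather than the probability computation, which is immediate once they are in hand. Concretely, one must check that the construction of $T$ from the boundary curves and their bubbles in the proof of \cref{thm1} is genuinely a function of the single curve $\eta$ (and not, say, of a coupling symmetric in $\eta^1$ and $\eta^2$), so that homeomorphism-invariance survives when the two laws differ; and that the non-atomicity of $\mu_\kappa$ is established for every $\kappa$ in the relevant range and not merely for one reference value. For $\kappa\ge 8$ there is the additional bookkeeping flagged in the remark after \cref{thm2}: because the curve is space-filling, the bubble observable must be set up through the boundary $SLE_{16/\kappa}(\bar\rho)$ curves with $16/\kappa\le 2$, and one must confirm that a parametrized homeomorphism still respects this structure. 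I expect each of these verifications to be routine given the machinery already developed, so that the conjecture reduces, modulo the mixed-regime observation, to the non-atomicity already proven for the single-$\kappa$ theorems.
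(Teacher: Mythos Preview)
The paper does not prove this statement; it is left as a conjecture, with the remark that a proof along the same lines ``would have to explicitly compute some of the quantities involved to show that they are $\kappa$-dependent.'' Your proposal therefore cannot be compared against an existing proof, only against this suggested approach, and you have noticed something the paper apparently overlooked: no $\kappa$-dependence is needed. Once the paper has established, for each fixed $\kappa$ in the relevant range, that the invariant has no atoms (\cref{seq01} for the bubble sequence $X$ when $\kappa\in(4,8)$; the argument surrounding \eqref{main} for the crossing-count sequences $\{N_k\}$ when $\kappa\ge 8$), independence of $\eta_1$ and $\eta_2$ alone forces $\Prob[T(\eta_1)=T(\eta_2)]=0$, regardless of whether the two laws coincide or are even mutually singular. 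The computation is identical to the one concluding the proofs of \cref{thm1} and \cref{thm2}: condition on $\eta_2$, apply non-atomicity for $\kappa_1$ to the now-deterministic target sequence, then take a countable union over index shifts (and over crossing indices $m_1,m_2$ in the $\kappa\ge 8$ case). Your mixed-regime argument is also correct and is in fact the cleanest case: space-filling versus nowhere dense is a purely topological obstruction requiring no probability at all. So the conjecture, read with the same implicit independence assumption as \cref{thm1} and \cref{thm2}, is essentially already proved by the paper's existing lemmas.

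Two small corrections. First, for $\kappa\ge 8$ the paper's invariant is \emph{not} built from bubbles or from the boundary $SLE_{16/\kappa}(\bar\rho)$ curves; it is the sequence $\{N_k\}$ of crossing counts of $\eta$ itself about the origin, as in Section~4. Your ``bookkeeping'' concern about setting up the bubble observable with $16/\kappa\le 2$ is therefore misplaced; what you actually need is the check that a homeomorphism taking $\eta_1$ to $\eta_2$ as parametrized curves carries the crossing structure of one to that of the other, and the paper already argues this at the end of the proof of \cref{thm2}. Second, your Polish-space/non-atomicity abstraction is slightly loose, since the relevant object is really an equivalence class under index shift (and possibly reversal), and one must check that non-atomicity of $X$ passes to the quotient; but this is immediate from the countable-union step the paper uses, so it is a matter of presentation rather than substance.
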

We expect that this conjecture can be proved using similar ideas to the ones in this paper, but one would have to explicitly compute some of the quantities involved to show that they are $\kappa$-dependent. 
\section{Preliminaries}
Here we discuss a few $SLE$ basics as well as how one defines the more general $SLE_{\kappa}(\bar{\rho})$ processes. Recall that we write $\Half := \{ z \in \C: \mathfrak{Im}(z)> 0\}.$ If $K$ is a bounded closed subset of
$\Half$ such that $\Half \setminus K$ is simply connected, then we call $K$ a hull in $\Half$ w.r.t. $\infty$.
For such $K$, there is a unique $g_K$ that maps $\Half \setminus K$ conformally onto $\Half$ such
that $g_K (z) = z + \dfrac{a}{z} + O\left(\dfrac{1}{z^2}\right)$ as $z \to \infty$. The quantity $a$ is known as the \textit{\textbf{half plane capacity}} of $K$, and is denoted by $\text{hcap}{K}$. It can be shown that $a \geq 0$. The map $g_K$ is said to satisfy the hydrodynamic normalization at infinity. For a real interval $I$, let $\mathcal{C}(I)$ denote the real-valued continuous functions
on I. Suppose $U \in \mathcal{C}([0, T])$ for some $T \in (0,\infty]$. The chordal Loewner equation driven by $U$ is as follows:
\begin{equation}
\label{loewner}
 \dot{g}_t(z) = \dfrac{2}{g_t(z) - U_t} \qquad g_0(z) = z 
 \end{equation}
For $0 \leq t < T$, let $K_t$ and $g_t$ be the  chordal Loewner hulls and maps, respectively, driven by $U_t$.  Suppose that for every $t \in [0, T)$,
\[ \eta_t := \underset{z \in \Half, z \to U_t}{\lim}g_t^{-1}(z) \in \Half \cup \R \]
exists, and $\eta_t, 0 \leq t < T$, is a continuous curve. Then for every $t \in [0, T),
K_t$ is the complement of the unbounded component of $\Half \setminus \eta((0, t])$. We
call $\eta$ the chordal Loewner trace driven by $U_t$. In general, however, such a curve may not exist depending on the choice of driving function. 

An $SLE_{\kappa}$ in $\Half$ from $0$ to $\infty$ is defined by the random family of conformal maps $g_t$ obtained by solving the Loewner ODE driven by Brownian motion. In particular, we let $U_t = \sqrt{\kappa}B_t$, where $B_t$ is a standard Brownian motion. An
$SLE_{\kappa}$ connecting boundary points $x$ and $y$ of an arbitrary simply connected Jordan domain can be constructed as the image of an $SLE_{\kappa}$ on $\Half$ under a
conformal transformation $\Psi \colon \Half \to D$ sending $0$ to $x$ and $\infty$ to $y$. $SLE$ curves are characterized by scale invariance and the domain Markov property, and are viewed modulo reparametrization. The almost sure continuity of the curves of these processes has also been shown in \cite{schramm-sle}.

$SLE(\kappa; \bar{\rho})$, which is often written as $SLE_{\kappa}(\bar{\rho}_L; \bar{\rho}_R)$, is the stochastic process one obtains by solving \eqref{loewner} with a modification on the driving process $U_t$, which we now discuss. It is a natural generalization of $SLE_{\kappa}$ in which one keeps track of additional marked points which are called force points. 
Fix $\bar{x_L} = (x_{l,L}< \dots <
x_{1,L }\leq 0)$ and $\bar{x}_R = (0 \leq x_{1,R} < \dots < x_{r,R})$. We associate with each $x_{i,q}$
for $q \in {L, R}$ a weight $\rho_{i,q} \in \R$. An $SLE_{\kappa}(\bar{\rho}_L;\bar{\rho}_R)$ process with force points $(\bar{x}_L; \bar{x}_R)$
is the measure on continuously growing compact hulls $K_t$ generated by the Loewner chain with $U_t$ replaced by the solution to the system of SDEs given by

\begin{equation}
dU_t = \sum \limits_{i=1}^{l} \dfrac{\rho_{i,L}}{U_t - V^{i,L}} dt +
\sum \limits_{i=1}^{r} \dfrac{\rho_{i,R}}{U_t - V^{i,R}} dt + \sqrt{\kappa}\,dB_t
\end{equation}
\begin{equation}
dV_t^{i, q} = \dfrac{2}{V_t^{i,q}}dt; \quad V_0^{i,q} = x_{i,q}, \quad i \in \N, \quad q \in \{L,R\} 
\end{equation}
The existence and uniqueness of solutions to \eqref{loewner} is discussed in \cite{lsw-restriction}.
In particular, it is shown that there is a unique solution to \eqref{loewner} until the first         time $t$ that $U_t = V_t^{j,q}$ where $\sum \limits_{i=1}^j \rho^{i,q} \leq -2$ for $q \in \{L, R\}$ (we call this time the \textit{\textbf{continuation threshold}}). In particular,if $\sum \limits_{i=1}^j \rho^{i,q} > -2$ for all $1 \leq j \leq |\bar{\rho}^q|$
 for $q \in \{L, R\}$, then \eqref{loewner} has a unique solution for all times $t$. This even holds when
one or both of the $x^{1,q}$ are zero. Note that in \cite{ig1} it is proved that $SLE_{\kappa}(\bar{\rho})$ is generated by a curve and is transient. In this paper, we need only consider the case of two force points, though the main ingredients for the proofs are stated in more generality.   

For $\kappa > 4$, there is also significant interest in the hulls that are generated by the $SLE_{\kappa}$ curves. Duplantier conjectured in \cite{Duplantier_2000, dup-higher-mf} the duality between $SLE_{\kappa}$ and $SLE_{16/\kappa}$, which says that the boundary of an $SLE_{\kappa}$ hull behaves like an $SLE_{16/\kappa}$ curve, for $\kappa >4$. Many versions of this duality have also been shown in \cite{zhan-duality1, zhan-duality2, dubedat-duality, ig1, ig4}. 

Lemma 4.9 in \cite{ig1} asserts that, for $\kappa >4$, the outer boundary $\eta'$ of an $SLE_{\kappa}$ curve is an $SLE_{\kappa}(\bar{\rho})$ process. The $SLE$ curves are realized as \textbf{\textit{flow lines}} of the Gaussian free field (i.e $SLE_{\kappa}(\bar{\rho})$ curves coupled with the Gaussian free field in $\Half$), with the outer boundaries described as \textbf{\textit{counterflow lines}} (in which the coupling is done with the negation of the Gaussian field). Though we do not need this machinery as presented in \cite{ig1} and \cite{miller-wu-dim}, it serves as an excellent framework for
proving some general properties of $SLE_{\kappa}(\bar{\rho})$, some of which we rely on to prove the main results. We state one such fact as follows:

\begin{lemma}
\label{millerwu}
Fix $\kappa > 0$. Suppose that $\eta$ is an $SLE_{\kappa}(\bar{\rho}_L; \bar{\rho}_R)$ process in $\Half$
from $0$ to $\infty$ with force points located at $(\bar{x}_L; \bar{x}_R)$ with $x_{1,L} = 0^-$ and $x_{1,R} = 0^+$ (possibly by taking $\rho_{1,q} = 0$ for $q \in \{L, R \}$). Assume that $\rho_{1,L},\, \rho_{1,R} > -2$. Fix $k \in \N$ such that $\rho = \sum \limits_{j=1}^k \rho_{j,R} \in (\frac{\kappa}{2}-4, \frac{\kappa}{2}-2)$ and  $\epsilon >0$. There exists $p_1 > 0$
depending only on $\kappa,\, \emph{max}_{i,q} |\rho_{i,q}|, \, \rho$, and $\epsilon$ such that if $|x_{2,q}| \geq \epsilon$ for $q \in \{L, R\},
x_{k+1,R}\, - \, x_{k,R} \geq \epsilon$, and $x_{k,R} \leq \epsilon^{-1}$
then the following is true. Suppose that $\gamma$ is a simple curve starting from $0$, terminating in $[x_{k,R}, x_{k+1,R}]$, and otherwise does not hit $\partial \H$. Let $A(\epsilon)$ be the $\epsilon$- neighborhood of $\gamma([0, T])$ and let $$\sigma_1 = \emph{inf} \{t \geq 0 : \eta(t) \in (x_{k,R}, x_{k+1,R})\} \quad \emph{and} \quad \sigma_2 = \emph{inf}\{t \geq 0 : \eta(t) \notin A(\epsilon)\}$$.
Then $\Prob[\sigma_1 < \sigma_2] \geq p_1$.
\end{lemma}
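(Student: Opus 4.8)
The plan is to prove the estimate by a Girsanov change of measure that reduces the $SLE_\kappa(\bar\rho)$ law to a reference law whose driving noise is a pure (scaled) Brownian motion, combined with the Stroock--Varadhan support theorem and the continuity of the Loewner trace as a function of its driving function. I would first recast the geometric event $\{\sigma_1 < \sigma_2\}$ as an event about the driving process $U_t$: by Loewner continuity, if $U$ stays sufficiently close in sup-norm on a bounded time interval to a deterministic target function $\hat U$ whose Loewner trace shadows $\gamma$ inside the $\epsilon/2$-neighborhood and terminates by touching $(x_{k,R}, x_{k+1,R})$, then the trace $\eta$ stays inside $A(\epsilon)$ and hits the target interval. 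Hence $\{\sigma_1 < \sigma_2\}$ contains a sup-norm neighborhood event for $U$, and it suffices to lower bound the probability that $U$ tracks $\hat U$ up to the relevant stopping time.

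Writing $dU_t = \sqrt{\kappa}\,dB_t + b_t\,dt$ with $b_t = \sum_{i,q}\rho_{i,q}/(U_t - V_t^{i,q})$, the drift is bounded so long as the tip stays away from the force points. The force points other than $0^\pm$ lie at distance at least $\epsilon$ from the origin, and on the event that $\eta$ remains in $A(\epsilon)$ (a bounded region, since $x_{k,R}\le\epsilon^{-1}$) their contribution to $b_t$ is uniformly bounded; the singular interactions with $x_{1,L}=0^-$ and $x_{1,R}=0^+$ (non-degenerate because $\rho_{1,L},\rho_{1,R}>-2$) and with the right force points swallowed as the tip approaches the target are absorbed into the reference process itself. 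Against this reference process the Radon--Nikodym derivative is then an exponential martingale with uniformly bounded exponent on the tracking event, so the support theorem, which gives a uniformly positive probability that the Brownian part alone drives $U$ close to $\hat U$, transfers to a positive lower bound for the full process. The accumulated-weight hypothesis $\rho = \sum_{j\le k}\rho_{j,R}\in(\frac{\kappa}{2}-4,\frac{\kappa}{2}-2)$ enters precisely here: by the standard $SLE_\kappa(\rho)$ boundary-intersection criterion it guarantees that, once $x_{1,R},\dots,x_{k,R}$ have been swallowed into a single effective force point of weight $\rho$, the curve genuinely touches the interval to its right rather than avoiding it or adhering to the boundary, so the hitting portion of the event is non-degenerate.

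Finally, one must check that the resulting $p_1$ depends only on $\kappa$, $\max_{i,q}|\rho_{i,q}|$, $\rho$, and $\epsilon$. The separation hypotheses bound the regular part of $b_t$ uniformly, and $x_{k,R}\le\epsilon^{-1}$ together with the scale invariance of $SLE_\kappa(\bar\rho)$ confine the relevant configurations to a compact family; shadowing $\gamma$ is handled by a chaining argument that decomposes the tube into finitely many overlapping cells and applies a uniform one-step crossing estimate via the domain Markov property. I expect the main obstacle to be exactly this uniformity: controlling the drift created by the force points at $0^\pm$ and by the right force points the curve swallows on its way to the target, and verifying that the lower bound degrades only through the listed parameters. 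The cleanest route seems to be to run the Girsanov comparison only up to stopping times at which the tip is well separated from the unswallowed force points, and to package all near-tip singular behavior into the reference process, keeping the comparison density bounded throughout.
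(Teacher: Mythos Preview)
The paper does not prove this lemma at all: its entire proof is the sentence ``This is Lemma~2.5 in \cite{miller-wu-dim}.'' So there is no in-paper argument to compare against; the relevant comparison is with the Miller--Wu proof, which proceeds by absolute continuity to reduce to a two-force-point process and then iterates a one-step crossing estimate via the domain Markov property and scale invariance.

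Your sketch is in the right spirit, and the Girsanov reduction of the far force points to a bounded drift is correct and is exactly how Miller--Wu strip them away. The gap is the ``Loewner continuity'' step. The map from driving function to trace is \emph{not} continuous in sup-norm in general; there are explicit examples of driving functions converging uniformly whose traces do not converge. In particular you cannot choose a deterministic $\hat U$ whose Loewner trace both stays in the $\epsilon/2$-tube and \emph{touches} $(x_{k,R},x_{k+1,R})$ and then infer that any $U$ sup-norm close to $\hat U$ produces a trace doing the same: a smooth $\hat U$ gives a simple trace that never returns to $\mathbb R$, while a $\hat U$ rough enough to touch the boundary is exactly the regime where continuity of the trace in $U$ fails. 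So the support-theorem route, as you have framed it, does not close.

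What does work is essentially your final paragraph, promoted from an afterthought to the main argument: decompose the tube into a bounded number of cells (the number depending only on $\epsilon$ and $x_{k,R}\le\epsilon^{-1}$), and at each step use the domain Markov property together with a uniform one-step estimate for an $SLE_\kappa(\rho_{1,L};\rho_{1,R})$-type process (this is where $\rho_{1,L},\rho_{1,R}>-2$ and $\rho\in(\tfrac{\kappa}{2}-4,\tfrac{\kappa}{2}-2)$ are used) to cross into the next cell while staying in the tube; multiply the bounded number of uniformly positive probabilities. No global continuity of the Loewner map is needed, and the dependence of $p_1$ only on $\kappa$, $\max|\rho_{i,q}|$, $\rho$, $\epsilon$ falls out of the cell count and the one-step bound. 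This is the Miller--Wu argument; your Girsanov observation is the first move in it, not a substitute for it.
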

Intuitively, \cref{millerwu} tells us that an $SLE_{\kappa}(\bar{\rho}_L ; \bar{\rho}_R)$ process has a positive chance to stay close to any fixed deterministic curve for a positive amount of time.
\begin{proof}
This is lemma 2.5 in \cite{miller-wu-dim}.
\end{proof}
\section{Proof of Theorem 1}
Consider the left and right boundaries of the $SLE$ curve $\eta$, which are boundary-touching $SLE_{\frac{16}{\kappa}}(\rho)$ curves, with force points starting at $0$. In fact, the left boundary of $SLE_{\kappa}$ turns out to be $SLE_{16/\kappa}(\frac{16}{\kappa} - 4 ; \frac{8}{\kappa} - 2)$ and by symmetry, the right boundary is $SLE_{16/\kappa}(\frac{8}{\kappa} - 2 ; \frac{16}{\kappa} - 4)$. This can be deduced from Proposition 7.31 in \cite{ig1}. These curves are shown in the figure below. The open region between the left and right boundaries has countably many connected components, which are separated by the intersection points of the left and right boundaries, i.e., the cut points of $\eta$. These connected components have a total ordering, and come in four types:
\begin{itemize}
    \item Type 0: Neither the left nor the right boundary of the component intersects the real line.
    \item Type 1: Only the right boundary intersects the real line.
    \item Type 2: Only the left boundary intersects the real line.
    \item Type 3: The left and right boundaries both intersect the real line.
\end{itemize}
\begin{figure}[ht]
\centering
\includegraphics[scale=0.5]{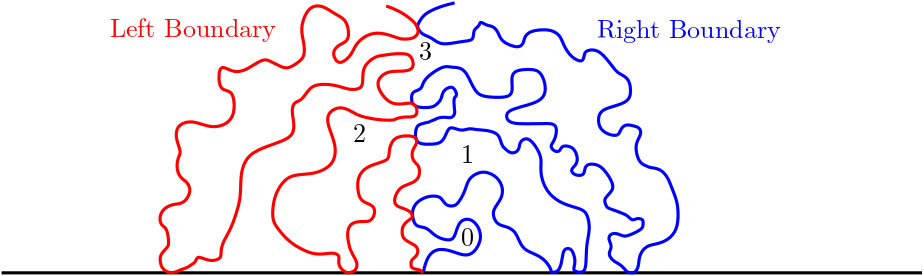}
\caption{We view the complement of the $SLE$ curve as the union of two boundary-touching $SLE_{\kappa}(\bar{\rho})$ processes. We observe `bubbles' of four types, which we use in constructing the observable invariant.}
\end{figure}
Note that $\eta$ is a continuous curve that travels between the positive and negative real axes between any two consecutive components of type 3. This shows that the components of type 3 form a discrete set, to which we may assign a labeling by the integers - written as $$(\dots U_{-1}, U_0, U_1, U_2, \dots )$$ uniquely, modulo index shift. For concreteness, we choose the indexing for the sequence so that $U_0$ is the first type 3 bubble which has Euclidean diameter at least 1. We remark here that our construction relies on a few tail triviality arguments, and so we require the following:
\begin{lemma}
\label{factoid}
Suppose $t > 0$ and let $a_t$ (resp. $b_t$) be the last time before $t$ at which $\eta$ hits the left (resp. right) boundary. Then $\eta|_{[0,t]}$ determines the set of bubbles (i.e. connected components of the region between the left and right boundaries) which are formed before time $\min\{a_t, b_t\}$ as well as their types. 
\end{lemma}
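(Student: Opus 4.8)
The assertion is purely geometric: it says that a deterministic functional of the curve (the collection of early bubbles together with their types) is a function of the truncation $\eta|_{[0,t]}$ alone. The plan is to exploit three structural features of $\eta$ for $\kappa\in(4,8)$: that $\eta$ is non-self-crossing; that it traces each of the boundaries $\p_L$ and $\p_R$ monotonically, so that the time-sets $T_L=\{s:\eta(s)\in\p_L\}$ and $T_R=\{s:\eta(s)\in\p_R\}$ are ordered with $\eta$ moving from $0$ toward $\infty$ along the respective boundary as these times increase; and that the cut points of $\eta$ --- equivalently the points where $\p_L$ and $\p_R$ meet --- are global cut points, i.e.\ if $c=\eta(\tau)$ is such a point then $\eta([0,\tau])\cap\eta([\tau,\infty))=\{c\}$. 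Each bubble is the region enclosed between two consecutive cut points, bounded by an arc of $\p_L$, an arc of $\p_R$, and possibly arcs of $\R$; I take its \emph{formation time} to be the time $\tau_U$ at which $\eta$ reaches the cut point closing it off (its upper endpoint in the ordering toward $\infty$).

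First I would show that a bubble $U$ with $\tau_U<\min\{a_t,b_t\}$ is ``locked in.'' Applying the global cut point property at the closing cut point $\eta(\tau_U)$, the images $\eta([0,\tau_U])$ and $\eta([\tau_U,\infty))$ meet only at $\eta(\tau_U)$; hence the closed region $U$, all of whose bounding arcs are traced at times $\le\tau_U$, is separated from the curve run at times $>\tau_U$, and in particular cannot be altered by the evolution of $\eta$ after time $t$. Consequently $U$, as a subset of $\overline{\Half}$, and its type depend only on $\eta|_{[0,\tau_U]}$, hence only on $\eta|_{[0,t]}$ since $\tau_U<t$.

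The crux is then to recover these bubbles from the truncated curve alone, since $\p_L$, $\p_R$ and the cut points are a priori defined through the entire curve. This is where $a_t,b_t$ enter. I would argue that the left boundary of the hull generated by $\eta|_{[0,t]}$ coincides with $\p_L$ below the point $\eta(a_t)$, and symmetrically that the right boundary of the hull of $\eta|_{[0,t]}$ coincides with $\p_R$ below $\eta(b_t)$. Granting this, every bubble with $\tau_U<\min\{a_t,b_t\}$ has its left arc traced before time $a_t$ and its right arc before time $b_t$, so by monotonicity of the tracing these arcs lie below $\eta(a_t)$ on $\p_L$ and below $\eta(b_t)$ on $\p_R$; both agreements therefore cover the whole boundary of $U$. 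Hence $U$, the two cut points bounding it, and whether each of its bounding arcs meets $\R$ --- that is, its type among $0,1,2,3$ --- are all read off from $\eta|_{[0,t]}$, which identifies the full collection of bubbles formed before $\min\{a_t,b_t\}$ together with their types.

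I expect the agreement of the truncated and full boundaries below the levels $\eta(a_t)$ and $\eta(b_t)$ to be the main obstacle. Since $\eta([0,t])\subseteq\eta([0,\infty))$, the hull of $\eta|_{[0,t]}$ is contained in the full hull, so its left boundary lies weakly to the right of $\p_L$; the real content is that no retroactive change occurs below $\eta(a_t)$. In one direction, points of $\p_L$ below $\eta(a_t)$ are traced by $\eta|_{[0,a_t]}$ and remain accessible from the (now larger) left complementary region of the smaller hull, so they persist on the truncated left boundary. In the other direction, a point on the truncated left boundary below $\eta(a_t)$ that failed to lie on $\p_L$ would have to be enclosed by $\eta$ at later times; but $\eta$ does not meet $\p_L$ on $(a_t,t)$, so the intervening curve stays strictly to the right of the portion of $\p_L$ below $\eta(a_t)$, and together with the non-crossing property this rules out such enclosure. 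Making this dichotomy precise, and tracking the real-line arcs when classifying types, is where the careful work lies; the remaining steps are routine consequences of the cut point structure.
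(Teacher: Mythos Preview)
Your approach is correct and is, at bottom, the same as the paper's: both rest on the non-self-crossing property of $\eta$ and the fact that the segment $\eta([\min\{a_t,b_t\},t])$ separates the earlier bubbles from $\eta(t)$, so that the curve after time $t$ cannot touch them. The paper dispatches this in a single sentence, whereas you have unpacked what that sentence actually requires --- in particular, the nontrivial point that the left and right boundaries of the time-$t$ hull agree with the full boundaries $\p_L,\p_R$ below the marks $\eta(a_t),\eta(b_t)$, so that the bubbles and their types can be read off from the truncated curve without reference to the full $\p_L,\p_R$. Your monotone-tracing and cut-point arguments make this explicit; the paper's proof simply asserts the conclusion as ``trivial.'' There is no substantive difference in strategy, only in level of detail, and the extra care you identify as ``the main obstacle'' is genuine but indeed routine once the structural facts (monotone tracing of each boundary, global cut points) are granted.
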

\begin{proof}
This follows trivially from the fact that  $\eta$ cannot cross itself and $\eta( [\min\{a_t , b_t\} , t] )$ disconnects all of the bubbles formed before time $\min\{a_t, b_t\}$ from $\eta(t).$ 
\end{proof}

Between pairs of consecutive type 3 bubbles, $U_{i}$ and $U_{i+1}$, we may either observe a type 1 or 2 bubble, or we may not. Let $E_i$ be the event that there is a type 1 or type 2 bubble between $U_i$ and $U_{i+1}$, and define 
\[ X:= (\dots \mathbbm{1}_{E_{-1}}, \mathbbm{1}_{E_0}, \mathbbm{1}_{E_1}, \mathbbm{1}_{E_2}, \dots ) \]
 the bi-infinite sequence of 0's and 1's consisting of the indicators of the $E_i$'s. 
 
 \begin{lemma}
 \label{seq01}
 For any fixed deterministic bi-infinite sequence of $0$'s and $1$'s $x$, we have $\Prob[X = x] = 0$.
 \end{lemma}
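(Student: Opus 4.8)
The plan is to prove the stronger statement that the \emph{forward} sequence $(\mathbbm{1}_{E_i})_{i \geq 0}$ almost surely fails to match $(x_i)_{i\geq 0}$, which suffices since $\{X = x\}$ is contained in that event. Let $\tau_i$ be the stopping time at which $\eta$ completes the type $3$ bubble $U_i$, i.e.\ reaches the cut point separating $U_i$ from the following component, and let $\F_i$ be the $\sigma$-algebra generated by $\eta|_{[0,\tau_i]}$. By \cref{factoid}, the value of $\mathbbm{1}_{E_i}$ — whether a type $1$ or type $2$ bubble appears between $U_i$ and $U_{i+1}$ — is determined by $\eta|_{[0,\tau_{i+1}]}$, hence is $\F_{i+1}$-measurable. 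The core of the argument is a two-sided conditional estimate: there should exist $\delta>0$, depending only on $\kappa$, with
\begin{equation*}
\delta \leq \Prob[E_i = 1 \mid \F_i] \leq 1 - \delta \qquad \text{a.s.}
\end{equation*}
Granting this uniform bound — or, more modestly, such a bound along infinitely many indices $i$ — we get $\Prob[E_i \neq x_i \mid \F_i] \geq \delta$, so $\sum_i \Prob[E_i \neq x_i \mid \F_i] = \infty$ a.s.; the conditional Borel--Cantelli lemma (L\'evy's extension of the second Borel--Cantelli lemma) then forces $E_i \neq x_i$ for infinitely many $i$ almost surely, whence $\Prob[X = x] = 0$.

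To produce the estimate I would combine the domain Markov property with conformal invariance. Conditioned on $\F_i$, the future $\eta|_{[\tau_i,\infty)}$ is a chordal $SLE_\kappa$ in the unbounded component $D_i$ of $\Half \setminus \eta([0,\tau_i])$ from the tip $\eta(\tau_i)$ to $\infty$. Mapping $D_i$ to $\Half$ by a conformal $\phi_i$ sending the tip to $0$ and $\infty$ to $\infty$, the image $\phi_i(\eta|_{[\tau_i,\infty)})$ is a standard $SLE_\kappa$, the two pieces of the original boundary $\R$ on $\partial D_i$ map to outer arcs $(-\infty, L_i]$ and $[R_i,\infty)$ with $L_i < 0 < R_i$, and the arc $(L_i,R_i)$ is the image of $\eta$'s own left/right boundary. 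A bubble of the image curve is of type $1$ or $2$ exactly when it touches one of the two outer arcs but not the other. To force $E_i = 1$ I would apply \cref{millerwu} with a deterministic curve $\gamma$ that makes a short excursion touching, say, $[R_i,\infty)$ and then pinches off a cut point before continuing, so that staying inside an $\epsilon$-tube of $\gamma$ guarantees a type $1$ bubble followed by a type $3$ bubble. To force $E_i = 0$ I would instead take $\gamma$ proceeding directly to a type $3$ bubble without meeting either outer arc in between. In each case \cref{millerwu} yields a positive lower bound on the corresponding conditional probability.

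The main obstacle is the \emph{uniformity} of these lower bounds over all realizations of the past, that is, over all marked-point configurations $(L_i, R_i)$ that can arise. The probability in \cref{millerwu} depends on the separation parameter $\epsilon$ and degenerates as $|L_i|$ or $R_i$ tends to $0$ or as the marked points coalesce, and a priori nothing prevents the conditional law of $(L_i,R_i)$ from drifting toward such degenerate configurations. To control this I would use the scale invariance of $SLE_\kappa$: rescaling $D_i$ so that a fixed geometric feature of $U_i$ (for instance its Euclidean diameter) equals $1$ normalizes the picture without altering any bubble types, reducing matters to configurations at a single scale. It then remains to recover a \emph{common} $\delta$ along infinitely many indices — e.g.\ by showing that the rescaled pair $(L_i,R_i)$ almost surely returns infinitely often to a fixed compact family of non-degenerate configurations on which \cref{millerwu} applies with one $\epsilon$. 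Running the conditional Borel--Cantelli argument along this subsequence of good indices suffices, since only divergence of $\sum_i \Prob[E_i \neq x_i \mid \F_i]$ is needed. Establishing this recurrence, and checking that the forcing curves $\gamma$ robustly realize the claimed bubble types under $\epsilon$-perturbation, is where the real work lies.
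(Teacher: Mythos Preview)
Your approach is genuinely different from the paper's, and the difference is exactly at the point you flag as unfinished. You try to obtain a conditional bound $\delta \le \Prob[E_i \mid \F_i] \le 1-\delta$ uniform over the past (or at least along a recurrent subsequence of indices), then feed this into L\'evy's conditional Borel--Cantelli. The recurrence of the rescaled configuration $(L_i,R_i)$ to a compact non-degenerate set is not established, and it is not clear how to get it: the Markov chain on configurations induced by passing from one type~3 bubble to the next has no obvious invariant structure, and \cref{millerwu} gives no control as the force points degenerate. So as written the proposal has a real gap.

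The paper sidesteps uniformity entirely. Instead of bounding $\Prob[E_i \mid \F_i]$ away from $0$ and $1$ for all (or infinitely many) $i$, it argues by contradiction: if $\Prob[X=x]>0$ then by scale invariance the probability that $(\dots,X_{-k-1},X_{-k})$ matches a fixed left-infinite tail is independent of the scale $r$ used to anchor the indexing, and a Blumenthal $0$--$1$ argument forces the tail to recur infinitely often, hence to be periodic. This reduces the problem to ruling out that $\{X_k\}$ is a \emph{fixed periodic} sequence, which only needs a \emph{finite}-step positivity statement (\cref{propzm}): any finite $0$--$1$ word occurs with positive probability. For that, the paper conditions not on $\eta|_{[0,\tau_i]}$ but on the entire left boundary $\eta^L$, under which $\eta^R$ is an $SLE_{16/\kappa}(-\tfrac{8}{\kappa};\tfrac{16}{\kappa}-4)$; \cref{millerwu} then gives $0<\Prob[E_{k_\tau}\mid \eta^L,\eta^R|_{[0,\tau]}]<1$ at each step, and strict positivity (no uniformity) is enough to iterate finitely many times. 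The trade-off: the paper's argument is less direct but complete; your route is conceptually cleaner but would need a non-trivial recurrence input that you have not supplied.
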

 \begin{proof}
 Consider a left-infinite sequence $y =( \dots y_{-2}, y_{-1}, y_0)$. For $k \in \mathbb{N}$, let $A_k$ be the event that $\{\dots X_{-k-1}, X_{-k} = y \}$. We wish to show that $\Prob[A_0]=0$. We will argue this by contradiction, but we first require a bit of setup. For $r \in \R_{>0}$, $n \in \N$, let $K_r^{(n)}$ be the $n$\nth smallest $k$ such that the Euclidean diameter of $U_k$ is at least $r$. Now, we claim that $\Prob[A_{K_1^{(n)}}] = 0$ for all $n$.  We argue to the contrary, and so we assume that there exists some $n$ such that $\Prob\left [A_{K_1^{(n)}}\right ] >0$. 
 Note that by scale invariance, $\Prob \left [A_{K_r^{(n)}} \right ]$ is independent of $r$, and so depends only on $n$. Consider the event $\bigcap_{i=0}^{\infty} \bigcup_{m \geq i} A_{K_{\frac{1}{m}}^{(n)}}$, which is a tail event for the Brownian motion that drives the $SLE$, for every choice of $n$. To see this, note that \cref{factoid} implies that for each $t$, $\mathcal{F}_t$ determines $A_{K_r^{(n)}}$ for each $r$ which is small enough so that the bubble $U_{K_r^{(n)}}$ is formed before time $\min\{a_t, b_t\}$. Thus, by continuity from above, we note that \[\Prob \left [\bigcap_{i=0}^{\infty} \bigcup_{m \geq i} A_{K_{\frac{1}{m}}^{(n)}} \right ] \geq \Prob \left [A_{K_1^{(n)}} \right] >0 \]
 and so the Blumenthal $0-1$ law implies that, a.s., there exists a sequence $\{r_j\} \rightarrow 0$ such that the events $A_{K_{r_j}^{(n)}}$ occur for all $j$. 
 This implies that there exist infinitely many $k$ such that $A_k$ occurs. Thus, it follows that a.s., $\exists$ infinitely many $k$ such that 
 \[(\dots X_{-k-1}, X_{-k}) = y\] 
 forcing the sequence $y$ to be periodic. We claim that this implies that the sequence $\{X_k\}$ is periodic. 
 Indeed, Let $m$ be the period of $y$. Since there are arbitrarily large $k$ for which $(...X_{-k-1} , X_{-k}) = y$ and $y$ is periodic, it follows that with probability tending to $1$ as $r \rightarrow 0$, the sequence \\$(...X_{-K_r^{(n)}-1} , X_{-K_r^{(n)}})$ is equal to $(...y_{-j-1}, y_{-j})$ for some $j = 1,...,m$. 
 By scale invariance, the probability that this is the case  for all values of $r$ is equal to 1. Thus, as $r \rightarrow \infty$, we see that the entire sequence $\{X_k\}$ is equal to $y$, shifted by some $j = 1,...,m$. This means that if we observe $(...X_{-k-1} , X_{-k})$ for some $k$, we can determine the rest of the sequence $\{X_k\}$, forcing this sequence to be itself periodic.
 
 For $t>0$, we have that by \cref{factoid} $\mathcal{F}_t$ determines the sequence $(\dots X_{-l-1}, X_{-l})$ for some $l$, which by periodicity is enough to determine the sequence $\{X_k\}$. Thus, by \cref{factoid}, $\mathcal{F}_t$ determines $\{X_k\}$ modulo an index shift for each $t>0$, and hence the sequence $\{X_k\}$ is deterministic modulo an index shift.  
 The goal now is to recursively apply \cref{millerwu} to arrive at a contradiction.
 \begin{proposition}
 \label{propzm}
 Let $\mathcal{Z}$ be a finite sequence of $0$'s and $1$'s which does not appear in $y$, with $|\mathcal{Z}| =m$. Then it must hold that \[\Prob\left [\{X_{1}, X_{2}, \dots, X_{m}\} = \{\mathcal{Z}_1, \mathcal{Z}_2, \dots, \mathcal{Z}_{m}\} \right ] >0. \]
 \end{proposition}
 Note that the existence of such a $\mathcal{Z}$ follows from the periodicity of $y$. With this result, we can conclude that the sequence $\{X_k\}$ can contain any finite sequence of $0$'s and $1$'s with positive probability, and hence cannot be periodic and deterministic modulo index shift. We delay the proof of the proposition to state the following key lemma, which uses the fact that the outer boundaries of the curve are $SLE_{\kappa}(\rho_L;\rho_R)$ processes, and more specifically the right boundary, $\eta^R$, conditioned on the left boundary, $\eta^L$, has distribution of  $SLE_{\frac{16}{\kappa}}(-\frac{8}{\kappa};\frac{16}{\kappa}-4)$ (see Lemma 7.1 in \cite{ig1}):
 \begin{figure}
 
     \centering
     \includegraphics[scale=0.48]{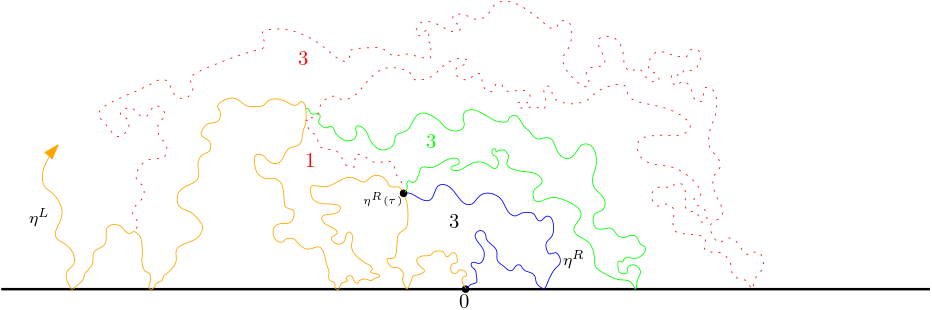}
     \caption{We condition on the left boundary (pictured as the orange curve) and run the right boundary until we first form a type 3 bubble of diameter at least 1 (blue). 
     At this time (denoted $\eta^R(\tau)$), we have two options: either the right boundary hits $[0,\infty)$ before hitting the left boundary again (green), thus forming a type 3 bubble, or it hits the left boundary first (red), forming a type 1 bubble before forming the next type 3 bubble. These events each occur with positive probability.}
     \label{op}
 \end{figure}
 \begin{lemma}
 \label{lemtau}
 Let $\tau$ be a stopping time for $\eta^R$ given $\eta^L$, at which $\eta^R$ forms a type 3 bubble denoted $U_{k_{\tau}}$. Let $E_{k_{\tau}}$ be the event that there is a type 1 or type 2 bubble between $U_{k_{\tau}}$ and $U_{k_{\tau}+1}$, as defined previously.  Then, 
 \[0< \Prob\left[ E_{k_{\tau}}\, \Big | \, \eta^L, \eta^R_{|_{[0, \tau]}} \right] <1. \]
 
 \end{lemma}
 \begin{proof}
 With some setup, this is a straightforward application of \cref{millerwu}. Indeed, let $z_{\tau}:= \eta^R(\tau)$ and define $C_{z_{\tau}}$ to be the connected component of $\eta^L \setminus \R$ containing $z_{\tau}$. Set \[ s^1:= \inf\{ t> \tau: \eta^R \cap [0, \infty) \neq \emptyset\}, \qquad s^2:= \inf\{ t > \tau: \eta^R \cap \eta^L \setminus (C_{z_{\tau}} \cup (-\infty, 0]) \neq \emptyset.\} \]
 By \cref{millerwu}, we have that  \[\Prob \left[s^2 > s^1 \Big | \, \eta^L, \eta^R_{|_{[0, \tau]}}\right] >0; \qquad \Prob \left[s^2 \leq s^1 \Big | \, \eta^L, \eta^R_{|_{[0, \tau]}}\right] >0  \] where the second inequality follows from symmetry considerations. Indeed, we can simply apply \cref{millerwu} to the curve $\eta^R$, under the conditional law given $\eta^L$. In this case, an interval on the left boundary corresponds to a segment of $\eta^L$. Note that these probabilities are strictly less than $1$ as they are both positive and complementary. With this, and appealing to the setting of \cref{op}, we have that $\eta^R[\tau, \infty)$, conditioned on $\eta^L, \eta^R_{|_{[0, \tau]}}$, will either first intersect the left boundary and form a type 1 bubble before forming another type 3 bubble, or it will intersect $[0, \infty)$ before hitting the left boundary again, forming another type 3 bubble. In particular, the event that a type 1 bubble is formed after $U_{k_{\tau}}$ occurs with probability strictly between $0$ and $1$ as desired.
 \end{proof}
 \begin{proof}[Proof of \cref{propzm}]
 We define a sequence of stopping times as follows: For a given bubble $U_i$, let $\tau_i$ be the corresponding time at which $U_i$ is formed. By our choice of indexing of the type 3 bubbles, we have that 
 \[\tau_0:= 1\st \text{ time we form a type 3 bubble of Euclidean diameter at least 1 } \]
 \[\tau_1:= 1\st \text{ time after } \tau_0 \text{ we form a type 3 bubble}\]
 \[\vdots \]
  \[\tau_m:= 1\st \text{ time after } \tau_{m-1} \text{ we form a type 3 bubble.} \]
  Note that $E_{k_{\tau_i}}$is measurable with respect to $\eta^L$ and $\eta^R|_{[0,\tau_{i+1}]}$, and for each $i \in \{1,2, \dots, m\}$, we have that by \cref{lemtau}, 
   \[0<\Prob\left[ E_{k_{\tau_i}}\, \Big | \, \eta^L, \eta^R_{|_{[0, \tau_i]}} \right] <1. \] Thus, it follows that 
   \[\Prob[X_i = \mathcal{Z}_i | X_1 = \mathcal{Z}_1 ,... , X_{i-1} = \mathcal{Z}_{i-1}] > 0 .\] To finish the proof, we note that since $\{X_j = Z_j\}$ is determined by $\eta^L$ and $\eta^R|_{[0,\tau_i]}$ for $i  < j$, so  
   \[\Prob[  X_1 = \mathcal{Z}_1 , ... , X_i  = \mathcal{Z}_i ] = \mathbb{E} \left[ \Prob[ X_i = \mathcal{Z}_i \,|\, \eta^L , \eta^R|_{[0,\tau_i]} ]  \mathbbm{1}_{{ X_1 = \mathcal{Z}_1 , ... , X_{i-1}  = \mathcal{Z}_{i-1}}}   \right].  \] The probability within the expectation on the right hand side is always positive, and so inducting on $i$ (and setting $i=m$ as a final step) yields the desired result.
 \end{proof}
By \cref{propzm}, we see that $\{X_k\}$ can contain any finite sequence of $0$'s and $1$'s not contained in $y$, implying that $\{X_k\}$ cannot be deterministic modulo index shift. This is a contradiction. Thus, $\Prob[A_{K_1^{(n)}}] = 0$ for every $n$. 
 
 Thus, by scale invariance we see that $\Prob[A_{K_r^{(n)}}] = 0$ for every $r$ and $n$. Note that every $k$ is equal to $K_r^{(n)}$ for some rational $r$ and some $n$. Indeed, every $k$\nth bubble has some positive diameter, and there are at most finitely many bubbles before it of larger diameter. Thus, we can set $n$ to be the number of bubbles before the $k$\nth bubble with diameter exceeding that of the $k$\nth bubble, and simply let $r$ be any rational number slightly smaller than this diameter. From this, it follows that 
 \[\Prob[\exists \, k \text{ such that } A_k \text{ occurs }] \leq \Prob \left[ \bigcup_{n \in \N} \bigcup_{r \in \Q_{>0}} A_{K_r}^{(n)} \right] =0.\]
 In particular, we have that $\Prob[A_0]=0$.
 
 \end{proof}
 \begin{proof}[Proof of Theorem]
 Now let $\eta^1$ and $\eta^2$ be two independent $SLE$'s. In order for $\eta^1 \cup \R$ and $\eta^2 \cup \R$ to be homeomorphic via a homeomorphism that takes $\R$ to $\R$, it must be the case that the corresponding bi-infinite sequences $X^1$ and $X^2$ differ by at most an index shift. Indeed, any homeomorphism has to preserve the bi-infinite sequence of connected components lying between the left and right boundaries of the curve, as well as the types of these components. Thus, by the above argument, the probability that $X^1$ is equal to any of the countably many possible index shifted versions of $X^2$ is zero. Hence the probability that $\eta^1 \cup \R$ and $\eta^2 \cup \R$ are homeomorphic, via a homeomorphism that takes $\R$ to $\R$, is 0.
 \end{proof}
 \section{Proof of Theorem 2}
 Here, we require a more subtle argument that relies on a less obvious statistic of observation. In this section, we fix $\kappa \geq 8$ and recall $SLE_{\kappa}$, in this instance, is plane filling. Let $\eta$ be an instance of $SLE_{\kappa}$ in $\overline{\Half}$. We are interested in the successive crossing times (about the origin) of the curve $\eta$, i.e., the times at which $\eta$ hits the real line again, just after having hit it on the opposite side of the origin. We look at one such crossing time, and consider the part of the $SLE$ within this, observing the times it goes back and forth between the boundaries of the crossing excursion. As pictured below in \cref{fig}, these left and right crossings (within the curve) define a sequence of marked points $\{X_k \}$ along the boundary, which accumulate only at the tip of the curve. Via the corresponding Loewner map $g_t^{\eta}$, we may conformally map this configuration as shown in \cref{fig}, so that the tip goes to $0$, and we abtain a sequence of marked points along the left boundary. Notice these marked points are determined by the past, so we can condition on their locations, and the future will still be an $SLE$ by the Markov property.
 
 A bit more care is needed in defining these quantities. Let $\tau(t)$ be the last time before $t$ such that $\eta(t) \in \R$. Define the sets \[
 T_-:= \{t: \eta(\tau(t)) <0\} \qquad T_+:= \{t: \eta(\tau(t)) >0 \} \] and set $\mathcal{S}= \bar{T}_- \cap \bar{T}_+$. Notice that $\mathcal{S}$ is a discrete set since $\eta$ is continuous, and so it cannot cross back and forth between $(-\infty,0)$ and $(0,\infty)$ infinitely many times during any compact time interval contained in $(0,\infty)$. Thus, we may index the elements of $\mathcal{S}$ as a countable sequence of well defined crossing times $\{\tau_j\}$.
 
 Notice that these are not necessarily stopping times (which poses a problem in applying the strong Markov property), but this can be addressed by adopting some notation from the previous section as follows. Let $\eta_j := \eta \restr{[\tau_{j-1}, \tau_j]}$, which is the $j$\nth left-right crossing around $0$ that we observe. For $r>0$, let $J_r^{(n)}$ be the $n$\nth smallest $j$ for which the Euclidean diameter of $\eta_j$ is at least $r$. It is not difficult to see that the set of times $\{\tau_{J_r^{(n)}}\}$ is indeed a set of stopping times. To see this, let $t > 0$. If one sees $\eta|_{[0,t]}$, then one can determine the set $\{\tau_j : j \leq t\}$ (but not necessarily its indexing). 
 This follows from the definition of the times $\{\tau_j\}$ as the intersection points of $T_-$ and $T_+$, as shown previously. Hence $\eta|_{[0,t]}$ determines the set of excursions $\{\eta_j : \tau_j \leq t \}$. We have $\tau_{J_r^{(n)}} \leq t$ if and only if this set of excursions includes at least $n$ elements which have Euclidean diameter at least $r$. Hence $\{\tau_{J_r^{(n)}} \leq t\}$ is determined by $\eta|_{[0,t]}$, which holds for any choice of $t$. 
 
  \begin{figure}[ht]
     \centering
     \includegraphics[scale=0.48]{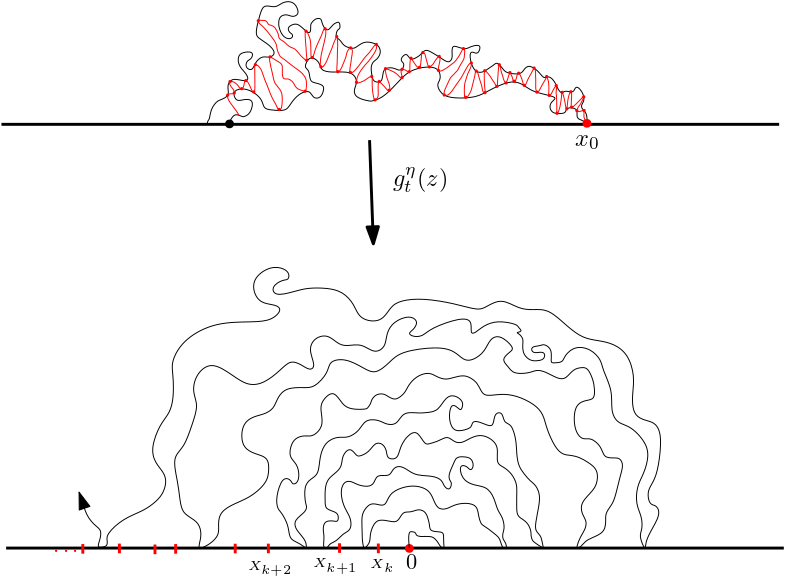}
     \caption{The top picture illustrates a single left-right crossing around $0$, with $x_0= \eta({\tau_J})$ and the corresponding triangulation in red, determined by the (past) piece of the curve making boundary crossings. We conformally map this down, and we consider intervals $[X_{k+1}, X_k]$ in which the tips of future triangles, obtained by left right crossings about $0$, may lie. Some intervals may have multiple, while some may have none.}
     \label{fig}
 \end{figure}

 We fix some $r$ and some $n$, and set $J:= J_r^{(n)}$. Within $\eta_{J}$, i.e. between the outer boundaries of this crossing, we can keep track of the times at which $\{\eta_t; t< \tau_{J}\}$ sequentially hits these boundaries. More precisely, we let $L_{J}$ be the outer boundary of $\eta[0, \tau_{J}]$. We define our sequence of crossing times inductively as follows:
 \[ \sigma_{J, 1}:= \min\{ t > \tau_{J-1}: \eta_t \cap L_J \neq \emptyset\} \]
  \[ \tilde{\sigma}_{J, 1}:= \min\{ t > \sigma_{J, 1}: \eta_t \cap L_{J-1} \neq \emptyset\} \]
  \[ \vdots \]
   \[ \tilde{\sigma}_{J, k}:= \min\{ t > \sigma_{J, k}: \eta_t \cap L_{J-1} \neq \emptyset\} \]
    \[ \sigma_{J, k+1}:= \min\{ t > \tilde{\sigma}_{J, k}: \eta_t \cap L_{J} \neq \emptyset\} \] and so on.  
 The sequences $\{\sigma_{J,k}\}_{k\geq 1}$ and $\{\tilde{\sigma}_{J,k}\}_{k\geq 1}$ define two discrete sets of times that our curve successively hits the outer boundaries $L_J$ and $L_{J-1}$ respectively. We assume without loss of generality that the $J$\nth excursion goes from left to right. By considering only the outer boundary $L_J$ (as a priori $\tau_J$ is a well-defined stopping time), we can construct a sequence of marked points $\{X_{J,k}\}_{k\geq1}$ along the negative real axis, via the (shifted) Loewner map which sends $\eta(\tau_J)$ to $0$. That is to say, $X_{J,k} := g_{\tau_J}(\eta(\sigma_{J,k}))- U_{\tau_J}$. As we are considering a fixed $J$, we may write $X_{J,k}:= X_k$ for ease. 

 We let $N_k= \# \left\{ \text{crossing endpoints of the future curve which lie in the interval } [X_{k+1}, X_k] \right\}.$ In other words, we are looking at $g_{\tau_J}(\eta|_{[\tau_J,\infty)}) - U_{\tau_J}$ as it does these left-right crossings, conditioned on the past, and for each interval we are keeping track of how many endpoints it contains. We wish to show that for every sequence of deterministic integers $\{n_k\}_{k \in \N}$, we have that
 \begin{equation}
 \label{main}
 \Prob[N_k = n_k; \, \forall \, k] =0.
 \end{equation}
 It suffices to show that there are arbitrarily large $k$ such that $\Prob[N_k= n_k]$ is bounded away from $1$. Indeed, the event $\{N_k = n_k \text{ for infinitely many } k\}$ is a tail event for the Brownian motion driving the $SLE$, and the Blumenthal $0-1$ law implies that this has probability $0$ or $1$. Thus, being bounded away from $1$ guarantees that we have \eqref{main}. We do this in cases as follows:
 
 \textit{Case 1}: Assume $\exists$ arbitrarily large $k$ such that $n_k \neq 0$. We claim that $\exists \, q>0$ such that \[ \Prob[N_k =n] \leq 1-q \quad \forall \, n \geq 1.\] To see this, we consider the segment of the curve, just after the $(n-1)^{th}$ crossing is completed. Let $\mathcal{T}_n = \{n^{th} \, \text{time we have a crossing in the interval} \, [X_k, X_{k+1}]\, \}$. Thus $\mathcal{T}_n$ is a stopping time, and conditioned on what we have seen up until this time, the future of the curve is still $SLE$.
 \begin{figure}[b]
     \centering
     \includegraphics[scale=0.48]{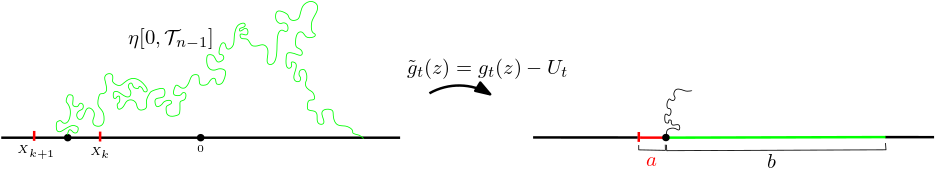}
     \caption{We stop the $SLE$ after it has made its $n-1$\nth crossing in the interval shown. Under the map $\tilde{g}$, we send the tip of the curve to the origin, and analyze the likelihood of either observing two more crossings in the red interval of length $a$, or no more crossings, in which case the interval is swallowed.}
     \label{fig:my_label}
 \end{figure}
 The goal is to have an upper bound on the probability that there are exactly $n$ crossings, and we do so by comparing the harmonic measure (from $\infty$) of the interval $[X_{k+1}, \eta(\mathcal{T}_{n-1})]$, to that of the outer boundary of the curve $\eta[0, \mathcal{T}_{n-1}]$(and more precisely, this is the harmonic measure from $\infty$ in $\Half \setminus \eta[0, \mathcal{T}_{n-1}]$) . These quantities are denoted $a$ and $b$ respectively, as shown in Figure 3. 
 
 The proof relies on the following intuitive argument which we formalize later: If $a$ is larger than $b$, then with positive probability we observe $2$ further crossings, hence $n+1$ total crossings. If $a$ is smaller than $b$, then, with positive probability, we expect the interval $[X_{k+1}, \eta(\mathcal{T}_{n-1})]$ to be covered before we observe the next crossing. In other words, there is always a positive chance that we observe either $n-1$ crossings or $n+1$ crossings, and so \[\Prob[N_k \neq n] >0. \]
 \begin{proposition}
 \label{beff}
 Let $\eta$ be an $SLE_{\kappa}$ from $0$ to $\infty$ in $\Half$ with $\kappa >4$. For marked points $a<0<c$ along the real line, let $E_{a,c}$ be the event that the chordal $SLE_{\kappa}$ trace
visits $[c, \infty)$ before $(-\infty, a]$. Then \[\Prob\left[E_{a,c}\right] = F\left(\dfrac{-a}{c-a}\right) \qquad \text{where } F(x) = \dfrac{1}{Z_{\kappa}}\displaystyle \int_0^x \dfrac{du}{u^{\frac{4}{\kappa}}(1-u)^{\frac{4}{\kappa}}}\] and $Z_{\kappa}$ is chosen so that $F(1)=1$.
 \end{proposition}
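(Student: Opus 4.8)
The plan is to push the two marked points through the Loewner flow and reduce the statement to a one-dimensional exit problem for a diffusion, whose exit probability is read off from its scale function. Write $U_t=\sqrt{\kappa}B_t$ for the driving function and let $g_t$ solve \eqref{loewner}. Set
\[ \hat X_t := g_t(c) - U_t, \qquad \hat Y_t := U_t - g_t(a), \]
which stay positive until $c$, respectively $a$, is swallowed. Differentiating \eqref{loewner} gives
\[ d\hat X_t = \frac{2}{\hat X_t}\,dt - dU_t, \qquad d\hat Y_t = \frac{2}{\hat Y_t}\,dt + dU_t. \]
Each of $\hat X_t,\hat Y_t$ is a time-changed Bessel-type process of dimension $1+4/\kappa$; since $\kappa>4$ this dimension is $<2$, so each reaches $0$ in finite time a.s. Let $T_c$ (resp.\ $T_a$) be the first time $\hat X_t$ (resp.\ $\hat Y_t$) hits $0$, i.e.\ the swallowing time of $c$ (resp.\ $a$), which along each ray coincides with the first time the trace meets $[c,\infty)$ (resp.\ $(-\infty,a]$), the nearest endpoint being swallowed first. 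These times are a.s.\ finite, and $E_{a,c}=\{T_c<T_a\}$; set $T:=T_c\wedge T_a$.

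Next I would introduce the normalized coordinate $S_t := \hat Y_t/(\hat X_t+\hat Y_t)$, so that $S_0=-a/(c-a)\in(0,1)$, while $S_t\to 1$ as $t\uparrow T_c$ and $S_t\to 0$ as $t\uparrow T_a$. The key cancellation is that $\Sigma_t:=\hat X_t+\hat Y_t$ has no martingale part, the $\pm dU_t$ terms cancelling: $d\Sigma_t = 2(\hat X_t^{-1}+\hat Y_t^{-1})\,dt$. Applying It\^o's formula to $S_t=\hat Y_t\,\Sigma_t^{-1}$ and substituting $\hat Y=S\Sigma$, $\hat X=(1-S)\Sigma$ yields, after simplification,
\[ dS_t = \frac{\sqrt{\kappa}}{\Sigma_t}\,dB_t + \frac{2}{\Sigma_t^2}\,\frac{1-2S_t}{S_t(1-S_t)}\,dt, \qquad d\langle S\rangle_t = \frac{\kappa}{\Sigma_t^2}\,dt. \]
Both the drift and the quadratic variation carry the common factor $\Sigma_t^{-2}$, so $S_t$ is a time-change of an autonomous diffusion on $(0,1)$, and its exit probabilities are governed by its scale function.

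Then I would seek $F$ making $F(S_t)$ a local martingale: by It\^o the drift of $F(S_t)$ vanishes iff $\tfrac{\kappa}{2}F''(S)+2\,\frac{1-2S}{S(1-S)}F'(S)=0$, since the factor $\Sigma_t^{-2}$ divides out. Because $\frac{1-2S}{S(1-S)}=\frac{d}{dS}\log\bigl(S(1-S)\bigr)$, integrating gives $F'(S)=C\,[S(1-S)]^{-4/\kappa}$, and a second integration with the boundary conditions $F(0)=0$, $F(1)=1$ produces exactly
\[ F(x)=\frac{1}{Z_\kappa}\int_0^x \frac{du}{u^{4/\kappa}(1-u)^{4/\kappa}}, \qquad Z_\kappa = \int_0^1 \frac{du}{u^{4/\kappa}(1-u)^{4/\kappa}}. \]
The integral defining $Z_\kappa$ converges precisely because $4/\kappa<1$, i.e.\ because $\kappa>4$ — the same condition guaranteeing that $a$ and $c$ are swallowed.

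Finally, since $F$ is continuous and bounded on $[0,1]$, the process $M_t:=F(S_{t\wedge T})$ is a bounded martingale, so optional stopping gives $F(S_0)=\E[F(S_T)]$. By the boundary behavior of $S_t$ we have $F(S_T)=\mathbbm{1}_{\{T_c<T_a\}}=\mathbbm{1}_{E_{a,c}}$, whence $\Prob[E_{a,c}]=F\bigl(-a/(c-a)\bigr)$. The main technical point to handle with care is exactly this last boundary analysis: I must verify that $S_t$ genuinely converges to an endpoint as $t\uparrow T$ (equivalently, that the surviving coordinate stays bounded away from $0$ up to time $T$) and that $\Prob[T_a=T_c]=0$, so that the continuous extension of $F$ to $\{0,1\}$ legitimately identifies $M_T$ with $\mathbbm{1}_{E_{a,c}}$.
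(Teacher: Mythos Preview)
Your argument is correct and is the standard derivation of this formula: push the boundary points through the flow, form the ratio $S_t=\hat Y_t/(\hat X_t+\hat Y_t)$, observe that its generator is autonomous after the time change, and read off the exit probability from the scale function $F$. The It\^o computation, the ODE for $F$, and the integrability condition $4/\kappa<1$ are all handled correctly, and you are right to flag the boundary behaviour of $S_t$ at $T$ as the one point requiring care.

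By contrast, the paper does not prove this proposition at all: its proof consists of the single sentence ``This is Theorem 10 in \cite{beffara-sle}.'' So you have supplied, in full, exactly the argument the paper outsources. What you wrote is essentially Beffara's own computation (and, for the one-sided analogue, Schramm's), so there is no genuine methodological difference to compare --- you have simply unpacked the citation.
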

 \begin{proof}
 This is Theorem 10 in \cite{beffara-sle}.
 \end{proof}
 \begin{remark}
 \label{rem2}
 It is possible to get an estimate which is weaker than Theorem 3 above, but which is still sufficient for our purposes, via the following elementary argument. For $x\in \R$, let $t_x := \inf [t \geq 0: \eta(t)= x].$ If we let $P(n) = \Prob[t_n < t_{-1}]$, a bit of thought shows that \[P(n) \geq P(n-1)[1- P(n)] \] which thus implies that \[P(n) \geq \dfrac{P(n-1)}{1+ P(n-1)}.\] The equality case can be realized as $P(n) = \dfrac{1}{n+1}$, the details of which we omit. By considering $f(x) = \dfrac{x}{x+1}$, which is increasing on $\R_{\geq 0}$, we find that \[P(n) \geq f(P(n-1)) \geq f^{(2)}(P(n-2) \dots \geq f^{(n)}\left(\frac{1}{2}\right) = \dfrac{1}{n+1}\] which gives a rough (yet easy to compute) estimate. Note, for our purposes, we only require a positive probability.
 \end{remark}
We return to the notation introduced in \cref{fig:my_label}, and we consider the the behavior of the $SLE$ curve given the relative quantities $a$ and $b$. In particular, we require the following two key lemmas to prove the original claim: 
\begin{lemma}
If $a\leq b$, it holds with conditional probability at least $\frac{1}{2}$, given $\eta|_{[0,\mathcal{T}_{n-1}]}$, that $\eta|_{[\mathcal{T}_{n-1},\infty)}$ hits $X_{k+1}$ before $[b,\infty).$
\end{lemma}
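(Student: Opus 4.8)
The plan is to reduce the statement to the boundary-hitting formula of \cref{beff} via the strong Markov property of $SLE_\kappa$. First I would record that, as established just above the lemma, $\mathcal{T}_{n-1}$ is a stopping time, so conditionally on $\eta|_{[0,\mathcal{T}_{n-1}]}$ the remainder $\eta|_{[\mathcal{T}_{n-1},\infty)}$ is a chordal $SLE_\kappa$ in the unbounded component of $\Half \setminus \eta[0,\mathcal{T}_{n-1}]$ from the tip $\eta(\mathcal{T}_{n-1})$ to $\infty$. Applying the shifted Loewner map $\tilde g := g_{\mathcal{T}_{n-1}} - U_{\mathcal{T}_{n-1}}$, which carries this component conformally onto $\Half$ and the tip to $0$, the image is a standard chordal $SLE_\kappa$ from $0$ to $\infty$ in $\Half$, and the targets $X_{k+1}$ and $[b,\infty)$ are carried to their $\tilde g$-images.

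Next I would fix the coordinates entering \cref{beff}. Since $\tilde g$ is hydrodynamically normalized ($\tilde g(z) = z + O(1/z)$ at $\infty$), it preserves harmonic measure from $\infty$; consequently the image of $X_{k+1}$ lies on the negative axis at a point I will call $-a$, while the portion of the real line lying to the right of the hull is carried to $[b,\infty)$, with $a$ and $b$ the harmonic-measure quantities named before the lemma. With the tip at $0$ we then have $-a < 0 < b$, and the event that the future curve hits $X_{k+1}$ before $[b,\infty)$ is precisely the complement of the event $E_{-a,b}$ of \cref{beff} that the trace visits $[b,\infty)$ before $(-\infty,-a]$.

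The computation is then immediate. By \cref{beff}, $\Prob[E_{-a,b}] = F\!\left(\tfrac{a}{a+b}\right)$, so the desired probability equals $1 - F\!\left(\tfrac{a}{a+b}\right)$. Because the integrand $u^{-4/\kappa}(1-u)^{-4/\kappa}$ defining $F$ is symmetric about $u=\tfrac12$, one has $F(x)+F(1-x)=1$ and $F(\tfrac12)=\tfrac12$, whence $1 - F\!\left(\tfrac{a}{a+b}\right) = F\!\left(\tfrac{b}{a+b}\right)$. Since $F$ is strictly increasing (the integrand is positive, and $\kappa \ge 8$ gives $4/\kappa < 1$ so $F$ is well defined) and the hypothesis $a \le b$ yields $\tfrac{b}{a+b} \ge \tfrac12$, I conclude $F\!\left(\tfrac{b}{a+b}\right) \ge F(\tfrac12) = \tfrac12$, which is the claim.

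The main obstacle is making the geometric dictionary rigorous, and two points need care. First, one must transport the harmonic-measure quantities $a$ and $b$ to the actual coordinates of the mapped marked points, checking that $X_{k+1}$ is not swallowed by $\eta[0,\mathcal{T}_{n-1}]$ (so that it maps to an honest boundary point on the negative axis) and that its image and the set $[b,\infty)$ genuinely sit on opposite sides of the tip; conformal invariance of harmonic measure from $\infty$ is the tool, but tracking which boundary arcs map where, given that the $J$-th excursion runs left to right, is delicate. Second, because $\kappa \ge 8$ makes $\eta$ space-filling, I must confirm that the event of interest, hitting $X_{k+1}$ before $[b,\infty)$, coincides with the ray-hitting event $(-\infty,-a]$ before $[b,\infty)$ governed by \cref{beff}, i.e. that the first visit to the left-hand target occurs at $X_{k+1}$ itself. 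Once these identifications are in place, the probabilistic content is entirely contained in \cref{beff}.
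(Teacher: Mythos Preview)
Your argument is correct and follows the same route as the paper: apply the strong Markov property at $\mathcal{T}_{n-1}$, push forward by $\tilde g$, and reduce to a boundary-hitting probability for a standard chordal $SLE_\kappa$ from $0$. The paper's own proof is terser still --- it invokes only the symmetry $\Prob[t_{-1}<t_1]=\tfrac12$ (together with the trivial monotonicity $[b,\infty)\subset[a,\infty)$ when $a\le b$), which is precisely your identity $F(\tfrac12)=\tfrac12$ without appealing to the full formula of \cref{beff}.
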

\begin{proof}
Notice that by symmetry, there is a positive chance that we disconnect $[X_{k+1}, \eta(\mathcal{T}_{n-1})]$ before hitting $b$. Indeed, this follows from the fact that $\Prob[t_{-1} < t_1] = \frac{1}{2}.$ 
\end{proof}
\begin{lemma}
There exists a deterministic $\kappa$-dependent constant $c > 0$ such that if $a > b$,  it holds with conditional probability at least $c$ given $\eta|_{[0,\mathcal{T}_{n-1}]}$ that $\eta|_{[\mathcal{T}_{n-1} , \infty)}$ crosses between $(-\infty,0)$ and $(0,\infty)$ at least twice before hitting $X_{k+1}.$
\end{lemma}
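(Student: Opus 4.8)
The plan is to condition on $\eta|_{[0,\mathcal{T}_{n-1}]}$, apply the conformal map $\tilde g$ that sends the current tip $\eta(\mathcal{T}_{n-1})$ to the origin, and invoke the Markov property, so that $\eta|_{[\mathcal{T}_{n-1},\infty)}$ becomes a fresh chordal $SLE_{\kappa}$ from $0$ to $\infty$ in $\Half$. In these coordinates the only data that survive are the harmonic--measure positions of the relevant boundary points: the endpoint $X_{k+1}$ of the target interval sits on the $(-\infty,0)$ side at the (relative) position $-a$, while the outer boundary that must be reached to complete a crossing sits on the $(0,\infty)$ side at position $b$, with $a>b$ by hypothesis (as in \cref{fig:my_label}). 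A crossing is exactly a sign change of the last real--line hit about the mapped origin, and ``two crossings before hitting $X_{k+1}$'' is the event that the curve reaches the $(0,\infty)$ side, then returns to deposit a second endpoint in $[X_k,X_{k+1}]\subset(-\infty,0)$, and does so before reaching $-a$ and swallowing the interval. Because the mapped curve is genuinely $SLE_{\kappa}$, each such first--passage event is governed by \cref{beff}.

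For the first crossing I would race the event of hitting $[b,\infty)$ against the event of reaching $X_{k+1}=-a$. By \cref{beff} the former has probability $F\!\left(a/(a+b)\right)$, and since $a>b$ the argument exceeds $1/2$; as $F$ is increasing with $F(1/2)=1/2$, this probability is at least $1/2$. This is precisely the mirror image, in the opposite regime, of the symmetry argument used in the preceding lemma. I then stop the curve at the first time it reaches the $(0,\infty)$ side, which is a stopping time, and apply the strong Markov property together with a second conformal map sending the new tip to $0$.

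For the second crossing I must show that, in this new configuration, the curve returns and lands a further endpoint in $[X_k,X_{k+1}]$ before escaping (before hitting $X_{k+1}$ or running off to $\infty$), with probability bounded below by a $\kappa$--dependent constant. This is again a first--passage event for a fresh $SLE_{\kappa}$, so \cref{beff} applies directly---or, since only positivity is needed rather than a sharp value, the cruder estimate of \cref{rem2} suffices---once I know that the harmonic measure of $[X_k,X_{k+1}]$ relative to the competing exit is bounded below. The product of the two conditional probabilities then gives the desired constant $c=c(\kappa)>0$, and by the Markov property this lower bound holds for every admissible configuration.

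The main obstacle is the uniform control in this second step: after the first crossing the harmonic measures have changed, and I must rule out the degenerate possibility that $[X_k,X_{k+1}]$ has been pushed to vanishingly small harmonic measure, which would let $c$ collapse to $0$. The key is that the hypothesis $a>b$ forces the first crossing to reach only the \emph{near} boundary at position $b$, leaving a definite amount of harmonic mass on the interval side. I expect to quantify this by a harmonic--measure comparison in $\Half$, combined with the monotonicity of the swallowing probability in the position of $X_{k+1}$ (moving $X_{k+1}$ farther out only delays the swallowing event). This reduces the estimate to the borderline symmetric case $a=b$, where the two--crossing probability is manifestly positive by the same symmetry reasoning as above. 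This monotonicity--plus--symmetry reduction is the step I expect to demand the most care.
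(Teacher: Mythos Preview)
Your overall architecture matches the paper: apply the Markov property at $\mathcal{T}_{n-1}$, map down via $\tilde g$ so the future is a fresh chordal $SLE_\kappa$ with $X_{k+1}$ at $-a$ and the outer boundary at $b$, and then control two successive first-passage events using \cref{beff}. Your first-crossing estimate is fine.

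The gap is in the second step. Your monotonicity reduction is correct: since points on $(-\infty,0)$ are swallowed in order, $t_{-a}\ge t_{-b}$ whenever $a\ge b$, so the two-crossing probability before $t_{-a}$ dominates the one before $t_{-b}$, and by scale invariance the latter is a single $\kappa$-dependent number. But your claim that this number is ``manifestly positive by the same symmetry reasoning as above'' is not correct. Reflection symmetry gives only $\Prob[t_b<t_{-b}]=\tfrac12$, a \emph{one}-crossing statement; it says nothing about a second crossing. Conditioned on $\{t_b<t_{-b}\}$, the curve may have come arbitrarily close to $-b$ before reaching $b$, so after mapping down the remaining room on the left can be arbitrarily small relative to the new outer boundary, and the conditional probability of a second crossing before $t_{-b}$ is not bounded away from zero. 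Hence you cannot simply multiply conditional probabilities, and symmetry alone does not yield positivity of the unconditional two-crossing probability.

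The paper resolves exactly this issue with a \emph{margin trick} that you are missing: instead of racing $t_b$ against $t_{-a}$, it races $t_b$ against $t_{-a/2}$. Since $a>b$, $\Prob[t_b<t_{-a/2}]\ge p$ for a universal $p>0$. On this event the interval $[-a,-a/2]$ is untouched, so after mapping down at time $t_b$ the image of $X_{k+1}$ still lies at distance at least $a/2$ from the new origin. The harmonic measure of the new outer boundary is then bounded by the harmonic measure of the outer boundary at time $t_{-a/2}$, whose law, divided by $a$, is scale-invariant and almost surely finite; hence it is at most $Ca$ with probability $\ge 1-p/2$ for some $C=C(\kappa)$. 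On the intersection of these two events one applies \cref{beff} (or \cref{rem2}) with the fixed ratio $Ca$ versus $a/2$ to get the second crossing with a uniform positive probability. Your monotonicity reduction is a legitimate first move, but proving positivity in the symmetric case $a=b$ still requires precisely this margin argument (or an appeal to \cref{millerwu}); it is not a consequence of symmetry.
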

\begin{proof}
If $a>b$, then we can apply the estimate given in \cref{beff} via a two step process. We retain the notation from \cref{rem2}, and define $t_{-a}$ and $t_b$ as discussed, after having mapped $\eta([0,\mathcal{T}_{n-1}])$ to the real line via the map $\tilde{g}$. Note that \cref{beff} implies that $\exists \, p>0$ such that $\Prob[t_b < t_{-a/2}] \geq p$. In fact, we have assumed $a>b$, so $p$ in this instance can be thought of as a universal bound. We condition on this event occurring, and we look at the harmonic measure of the outer boundary curve $\tilde{\eta}$ of this most recent crossing. Note that $\text{hm}_{\Half \setminus \tilde{\eta}}(\infty, \tilde{\eta})$ is bounded above by the harmonic measure of the outer boundary at the time we hit $-\dfrac{a}{2}$. This follows from the fact that the harmonic measure can only increase, as we observe more of the curve. Moreover, the law of this harmonic measure, divided by $a$, is independent of $a$ by scale invariance, and is almost surely finite. This implies that $\exists \, C = C(p) >0$ such that \[ \Prob\left [\text{hm}_{\Half \setminus \tilde{\eta}} ( \infty,  \tilde{\eta} ) \leq Ca\right ] \geq 1- \dfrac{p}{2}\]
from which it follows that 
\[\Prob\left[\text{hm}_{\Half \setminus \tilde{\eta}}(\infty, \tilde{\eta}) \leq Ca, \, t_b < t_{-a/2}\right] \geq \dfrac{p}{2}. \]
This bound guarantees a positive probability that, after we have observed the first crossing, the harmonic measure of the outer boundary is not too large. Now we condition on this event, and we apply Proposition 4.1 to the quantities $Ca$ and $\dfrac{a}{2}$. In particular, This yields a positive $\kappa$-dependent constant lower bound for the probability that $\eta|_{[\mathcal{T}_{n-1} , \infty)}$ has at least two crossings before hitting $X_{k+1}$.
\end{proof}
\textit{Case 2}: $n_k = 0$ for all but finitely many $k$.

This condition implies that the $SLE$ travels a positive distance of time without any left-right crossings, which happens with probability $0$. This shows that for any fixed deterministic sequence $\{n_k \}_{k \in \N}$ with only finitely many non-zero elements, we have that 
\[ \Prob[ \{N_k\} = \{n_k\}] =0. \]
\begin{figure}[ht]
    \centering
    \includegraphics[scale=0.48]{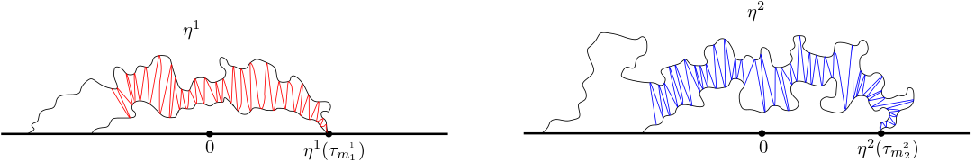}
    \caption{We observe two instances of $SLE$, $\eta^1$ and $\eta^2$, stopped after the $m_1$\nth and $m_2$\nth crossings respectively. Any homeomorphism between the two should send one tip to the other, and retain the structure of the future crossings (i.e., preserve the corresponding sequences $\{N_k^j\}$).}
    \label{homeo}
\end{figure}
\begin{proof}[Proof of theorem]
Consider two instances of $SLE_{\kappa}$ in $\Half$, $\eta^1$ and $\eta^2$, with corresponding sequences of points $\{X_{m_1,k}^1\}_{k \in \N}$ and $\{X_{m_2,k}^2\}_{k \in \N}$ respectively, for fixed indicies $m_1, m_2 \in \mathcal{S}$, corresponding to the $m_1$\nth crossing of $\eta^1$ and $m_2$\nth crossing of $\eta^2$ respectively. Here, we indicate objects associated with $\eta^j$ for $j \in \{1,2\}$ by a superscript $j$. Note that by construction, $m_1 = J_{r_1}^{(n_1),1}$ and $m_2 = J_{r_2}^{(n_2),2}$ for some $n_1, n_2$ and (rational) $r_1, r_2$. Each sequence of points $\{X_k^j\}_{k \in \N}$ generates a sequence $\{N_k^j\}_{k \in \N}$ for $j \in \{1,2\}$ and so by the independence of $\eta^1$ and $\eta^2$, as well as \eqref{main}, we have that for any choice of $m_1, m_2$ and number $l$  
\[\Prob\left[N_k^1 = N_{k+l}^2; \,\, \forall \, k\right] = \Prob\left [N_k^1 = N_{k+l}^2; \,\, \forall \, k \,|\, \eta^2\right ] =0. \] This implies that 
\begin{equation}
\label{shift}
\Prob\left[\exists \, l \text{ s.t } N_k^1 = N_{k+l}^2; \, \forall \, k \right] =0  
\end{equation}
as there are countably many possible choices of $l$, meaning we can apply this very argument for each fixed choice of $l$, and apply the union bound. 

Observe that a homeomorphism  from $\overline{\Half}$ to itself taking $\eta^1$ to $\eta^2$, modulo time parametrization, must preserve the number of left right crossings of the `future' curves, which correspond to the sequences $\{N_k^j\}$, and it must take $\eta^1(\tau_{m_1}^1) $ to $\eta^2(\tau_{m_2}^2) $ for some $m_2$. In particular, as in the setting of Figure 4, for any fixed $m_1$ and $m_2$ there is no homeomorphism which takes $\eta^1$ to $\eta^2$ and $\eta^1(\tau_{m_1}^1)$ to $\eta^2(\tau_{m_2}^2) $ by \eqref{shift}. As the set $\mathcal{S}$ of crossing times is discrete, this holds for any choice of indices $m_1$and $m_2$ , where there are only countably many choices. Thus, it must hold that, 
\[\Prob\left[ \exists \, \text{ a homeomorphism } \Phi: \overline{\Half} \to \overline{\Half} \text{ taking } \eta^1 \text{ to } \eta^2  \right] =0. \]

\end{proof}
\section*{Acknowledgements}
I would like to thank Prof. Ewain Gwynne for suggesting this problem, and for answering my many questions about the material presented here, as well as $SLE$ in general. I would also like to thank Prof. Gregory Lawler for suggesting readings and proof techniques to supplement this paper.
\bibliographystyle{plain}
\bibliography{cibib}
\end{document}